\pgfplotsset{compat=newest}    
\renewcommand{\emptyset}{\varnothing}
\theoremstyle{plain}\newtheorem{theorem}{Theorem}[section]
\theoremstyle{definition}\newtheorem{definition}[theorem]{Definition}
\theoremstyle{plain}
\theoremstyle{plain}\newtheorem{corollary}[theorem]{Corollary}
\theoremstyle{plain}\newtheorem{lemma}[theorem]{Lemma}
\theoremstyle{plain}\newtheorem{sublemma}[theorem]{Sublemma}
\theoremstyle{plain}\newtheorem{proposition}[theorem]{Proposition}
\theoremstyle{plain}
\theoremstyle{plain}
\theoremstyle{plain}
\theoremstyle{remark}
\theoremstyle{remark}
\theoremstyle{remark}
\theoremstyle{plain}
\theoremstyle{plain}
\theoremstyle{plain}
\theoremstyle{remark}\newtheorem{remark}[theorem]{Remark}
\theoremstyle{remark}
\theoremstyle{remark}
\numberwithin{equation}{section}
\numberwithin{theorem}{section}
\definecolor{c1}{HTML}{88d5d2} 
\definecolor{c2}{HTML}{9c9d47} 
\definecolor{c3}{HTML}{fec842} 
\definecolor{c4}{HTML}{e97a2e} 
\definecolor{c5}{HTML}{834e71} 
\begin{document}

\begin{center}
{\Large \textbf{A Linear Bound on the Diameter of the Kakimizu Complex for Hyperbolic Knots}}

{\Large \textbf{ }} 

{\large \textsc{Xiao} CHEN$^{*}$ and \textsc{Wujie} SHEN$^{**}$} 

{\small Tsinghua University}

{\footnotesize \emph{e-mail:} x-chen20@tsinghua.org.cn$^{*}$}

{\footnotesize \emph{e-mail:} shenwj22@mails.tsinghua.edu.cn$^{**}$}

\end{center}

\begin{abstract}

    This paper focuses on the Kakimizu complex of a hyperbolic knot $K$. We define a complex $IS_\ell(K)$ to study incompressible Seifert surfaces of genus at most $\ell$, and prove that it is connected and that its diameter admits a linear upper bound in terms of $\ell$. As a corollary, we show that the diameter of the Kakimizu complex $MS(K)$ of a hyperbolic knot grows linearly with the genus $g$, confirming a conjecture of Sakuma--Shackleton. More precisely, it is bounded above by $6g-4$.

\end{abstract}




\section{Introduction} \label{Diameter Kakimizu section: Introduction}

Given a knot $K$ (or a link $L$) in $\mathbb{S}^3$, a \emph{spanning surface} is an embedded (possibly disconnected) compact surface whose boundary is exactly $K$ (or $L$). A connected, orientable spanning surface is called a \emph{Seifert surface}. A Seifert surface of minimal genus is called a \emph{minimal-genus Seifert surface}, and its genus is referred to as the \emph{genus of the knot (or link)}, denoted by $g(K)$ (often abbreviated to $g$ in this paper). The Kakimizu complex $MS(K)$ of a knot $K \subset \mathbb{S}^3$, introduced by Kakimizu in \cite{MR1177053}, encodes both the classification of minimal-genus Seifert surfaces for $K$ and the topological adjacency relations among them.

The topology and simplicial structure of $MS(K)$ have been extensively studied: Kakimizu showed that it is connected \cite{MR1177053}, Schultens proved that it is simply connected \cite{MR2746341}, and Przytycki--Schultens later established its contractibility \cite{MR2869183}. Moreover, Agol--Zhang showed that every saturated simplex of $MS(K)$ has maximal dimension \cite{agol2022guts}. In certain families of knots, the complex can even be computed explicitly \cite{MR1177053, MR2131376, MR1315011}.

In the study of the Kakimizu complex diameter, Pelayo \cite{MR3078414} and Sakuma--Shackleton \cite{MR2531146}, using different methods, established a quadratic upper bound for the diameter of the Kakimizu complex of an atoroidal knot in terms of the genus of the knot. In this paper, we improve this result by proving a linear upper bound for the diameter of the Kakimizu complex of an atoroidal knot, thereby resolving a conjecture posed by Sakuma--Shackleton \cite[Question 1.7]{MR2531146}:

\begin{theorem}\label{main thm: Hyperbolic Knot Linear Bound}
Suppose $K$ is an atoroidal knot in $\mathbb{S}^3$ of genus $g$. Then
\begin{equation}
\operatorname{diam} (MS(K)) \leq 6g-4.
\end{equation}
\end{theorem}

The example from \cite[Proposition~1.4]{MR2531146} shows that the bound is sharp when $g=1$, where the diameter equals $2$. Moreover, the bound is optimal in order, as there exist atoroidal knots whose Kakimizu complexes have diameter at least $2g-1$ \cite[Proposition~1.6]{MR2531146}.

According to Thurston's trichotomy theorem \cite[Proposition 11.2.11]{martelli2016introduction}, atoroidal knots include both torus knots and hyperbolic knots. The remaining knots, those that are not atoroidal, are referred to as satellite knots. A torus knot has a unique incompressible Seifert surface \cite{MR1259522}, so its Kakimizu complex is trivial. Therefore, \cref{main thm: Hyperbolic Knot Linear Bound} primarily focuses on hyperbolic knots. Moreover, for hyperbolic knots, every vertex in the Kakimizu complex has finite valence \cite[Theorem 1.8]{MR2821430}. Combining these facts, hyperbolic knots admit only finitely many minimal-genus Seifert surfaces (however, the number of minimal-genus Seifert surfaces of a hyperbolic knot cannot be bounded solely as a function of the genus, as shown by Tsutsumi \cite[Theorem~1.1]{MR2426806}). 

In contrast, the properties mentioned above do not necessarily hold for satellite knots: the isotopy classes of their Seifert surfaces may form an infinite set \cite{MR440528}, the Kakimizu complex may have infinite diameter \cite{MR1177053}, and some vertices may have infinite valence \cite{MR2821430}.

Beyond the complex $MS(K)$ consisting of minimal-genus Seifert surfaces, Kakimizu \cite{MR1177053} also introduced a complex $IS(K)$ generated by incompressible Seifert surfaces. Although every minimal-genus Seifert surface is incompressible, the complexes $MS(K)$ and $IS(K)$ do not always coincide. For instance, certain \emph{pretzel knots} are hyperbolic and admit infinitely many incompressible Seifert surfaces of unbounded genus \cite{MR2420023}, whereas a hyperbolic knot admits only finitely many minimal-genus Seifert surfaces.

To build a bridge between $MS(K)$ and $IS(K)$, for any integer $\ell \geq g$, we define an intermediate complex $MS(K) \subset IS_{\ell}(K) \subset IS(K)$ called the \emph{$\ell$-Kakimizu complex}, consisting of incompressible Seifert surfaces of genus at most $\ell$. By definition, we have $MS(K)=IS_g(K)$ and $IS(K) = \cup_{\ell=g}^{\infty} IS_{\ell}(K)$. For the $\ell$-Kakimizu complex, we show that its diameter also admits a linear upper bound in terms of $\ell$:

\begin{theorem}\label{main thm2: Hyperbolic Knot Linear Bound}
Suppose that $ K \subset \mathbb{S}^3 $ is an atoroidal knot of genus $ g $. Then for any integer $ \ell \geq g $, the complex $ IS_{\ell}(K) $ is connected, and its diameter admits a linear upper bound in terms of $ \ell $:
\begin{align}
    \operatorname{diam}(IS_{\ell}(K)) \leq 2\max\{4\ell + g - 2, 3\ell + 3g - 4\}.
\end{align}
\end{theorem}


Our approach is inspired by the work of Sakuma--Shackleton \cite{MR2531146}, who analysed the structure of lifts of Seifert surfaces in the infinite cyclic cover of $\mathbb{S}^3 \setminus K$. 
Concretely, suppose that an atoroidal knot $K$ admits two distinct minimal-genus Seifert surfaces $S_1$ and $S_2$. Fix a lift of $S_1$ to the infinite cyclic cover. Then the lifts of $S_2$ decompose this lift of $S_1$ into a linearly ordered sequence of subsurfaces, and Sakuma--Shackleton established the following properties:
\begin{itemize}
    \item At most $2g-1$ of these subsurfaces have negative Euler characteristic;
    \item no sequence of $3g-2$ consecutive subsurfaces can all have vanishing Euler characteristic, since such a configuration would produce an incompressible torus, contradicting atoroidality.
\end{itemize}
While the initial observations yield only a quadratic upper bound for $\operatorname{diam}(MS(K))$, the key new ingredient of this paper is a refinement of this analysis via \emph{annulus surgery}. Using this technique, we show that all subsurfaces with negative Euler characteristic are contained in a sequence of consecutive subsurfaces of linear size in the genus, which we call the \emph{central zone} (see \cref{Diameter Kakimizu section: Central Zone}). This additional structure allows us to control the configuration much more effectively and reduces the resulting diameter bound to linear order.

\section{Background} \label{Diameter Kakimizu section: Background}

\subsection{Kakimizu's complexes}

\begin{definition}[Kakimizu complex, \cite{MR1177053}]
    Given a knot $K\subset \mathbb{S}^3$, the \emph{Kakimizu complex (or minimal-genus Seifert surface complex)} $MS(K)$ is a simplicial complex whose $0$-simplices are isotopy classes of orientable minimal-genus Seifert surfaces, and whose $n$-simplices ($n \geq 1$) are collections of $n+1$ distinct disjoint such classes (i.e., their representatives can be made disjoint through isotopy). These simplices are glued together along common faces in the usual way, forming the simplicial complex $MS(K)$.
\end{definition}

\begin{definition}[Incompressible Seifert surface complex, \cite{MR1177053}]
    The \emph{incompressible Seifert surface complex (or $\infty$-Kakimizu complex)} $IS(K)$ (or denoted by $IS_{\infty}(K)$) is a simplicial complex whose $0$-simplices are isotopy classes of orientable incompressible Seifert surfaces, and whose $n$-simplices ($n \geq 1$) are collections of $n+1$ distinct disjoint such classes.
\end{definition}

\begin{definition}[$\ell$-Kakimizu complex]
    We generalise the definition of the Kakimizu complex. For a knot $K \subset \mathbb{S}^3$ and an integer $\ell \geq g(K)$, where $g$ is the genus of $K$, the \emph{$\ell$-Kakimizu complex} $IS_\ell(K)$ is defined as the simplicial complex whose $0$-simplices are isotopy classes of orientable Seifert surfaces of genus at most $\ell$, and whose $n$-simplices ($n \geq 1$) are collections of $n+1$ distinct such classes that can be represented by disjoint surfaces.
\end{definition}

\begin{remark}
    For a knot $K$, every minimal-genus Seifert surface is incompressible. Therefore, the Kakimizu complex $MS(K)$ coincides with $IS_{g}(K)$ and is a subcomplex of the incompressible Seifert surface complex $IS_{\infty}(K) = \cup_{\ell=g}^{\infty} IS_{\ell}(K)$.
\end{remark}

\begin{definition}[Natural distance, \cite{MR2531146}]
    Let $K$ be a knot with two incompressible Seifert surfaces $S$ and $S'$, and suppose that their genera are both at most $\ell$. Their isotopy classes are denoted by $[S]$ and $[S']$, so that $[S]$ and $[S']$ are two vertices in $IS_{\infty}(K)$ (or $IS_{\ell}(K)$). The \emph{natural distance} $d_{\infty}([S],[S'])$ (or $d_{\ell}([S],[S'])$) is defined to be the distance between $[S]$ and $[S']$ in $IS_{\infty}(K)$ (or $IS_{\ell}(K)$).
    
    More precisely, let $d([S],[S'])$ denote either $d_\infty([S],[S'])$ or $d_\ell([S],[S'])$. If $d([S],[S']) = d$, then there exists a sequence of $d+1$ distinct incompressible Seifert surfaces $S = S_0, S_1, \cdots, S_d = S'$, each of genus at most $\ell$ in the case of $d_\ell$ (with no genus restriction in the case of $d_\infty$), such that for every $1 \leq i \leq d$, the surfaces $S_{i-1}$ and $S_i$ are disjoint. Moreover, $d$ is the minimal integer for which such a sequence exists.

\end{definition}

\begin{definition}[Diameter]
    For a knot $K \subset \mathbb{S}^3$, the \emph{diameter} of $IS_\ell(K)$ (or $IS_{\infty}(K)$) is defined to be the maximal natural distance between any two vertices in the complex if this quantity is finite; otherwise, the diameter is defined to be infinite.
\end{definition}

\subsection{Atoroidal knots and hyperbolic knots}

\begin{definition}[Incompressible torus]
    Let $M$ be a three-dimensional manifold. An embedded torus $f: \mathbb{T}^2 \hookrightarrow M$ is said to be \emph{incompressible} if the induced homomorphism $f_{*}: \pi_1 (\mathbb{T}^2) \rightarrow \pi_1(M)$ is injective.
\end{definition}

\begin{definition}[Essential torus]
    Let $M$ be a three-dimensional manifold. An embedded torus $\mathbb{T}^2$ is said to be \emph{essential} if it is an incompressible torus that is not boundary-parallel in $M$.
\end{definition}

\begin{definition}[Atoroidal knots]
    Let $K$ be a knot in $\mathbb{S}^3$, and let $W(K)$ be an open tubular neighbourhood of $K$. The knot $K$ is said to be \emph{atoroidal} if the $3$-manifold $\mathbb{S}^3 \setminus W(K)$ with torus boundary contains no essential torus. A knot is called a \emph{satellite knot} if it is not atoroidal.
\end{definition}

\begin{remark} \label{Thurston trichotomy theorem}
    Thurston's trichotomy theorem states that every atoroidal knot is either a torus knot or a hyperbolic knot \cite[Proposition 11.2.11]{martelli2016introduction}. Therefore, every knot is either a torus knot, a hyperbolic knot, or a satellite knot.
\end{remark}

\begin{remark} \label{rmk: torus kakimizu is trivial}
For a torus knot $K$, since it is fibered, it admits a unique minimal-genus Seifert surface given by the fiber. Consequently, its Kakimizu complex $MS(K)$ is trivial. Moreover, by \cite{MR1259522}, a torus knot admits a unique incompressible Seifert surface up to isotopy, and hence both $IS_\ell(K)$ and $IS(K)$ are trivial. Therefore, when considering upper bounds on the diameter of $MS(K)$, $IS_\ell(K)$, and $IS(K)$ for atoroidal knots, it suffices to restrict attention to the hyperbolic case.
\end{remark}

\section{Intersection of Two Seifert Surfaces} \label{Diameter Kakimizu section: Intersection of Two Seifert Surfaces}

To understand the intersection of two transversely intersecting Seifert surfaces, Sakuma--Shackleton introduced two lemmas, \cref{Sakuma1} and \cref{Sakuma2}. Moreover, we relax the assumptions of their second lemma (\cref{Sakuma2}) by proving \cref{Sakuma3}, from which \cref{Sakuma2} follows as a direct corollary.

\begin{lemma}[\cite{MR2531146}]\label{Sakuma1}
    Suppose that $S_1$ and $S_2$ are two distinct incompressible Seifert surfaces for a knot $K$. Then, $S_2$ is isotopic to a third surface $S_3$ such that $S_1 \cap S_3$ is a disjoint union of simple loops, and the number of connected components of $S_1 \cap S_3$ is less than or equal to that of $S_1 \cap S_2$.
\end{lemma}

\begin{definition}[Favourable pair]
    \label{defi: Favourable pair}
Let $S_1$ and $S_2$ be two distinct Seifert surfaces. We say that the pair $(S_1,S_2)$ is a \emph{favourable pair} if
\begin{itemize}
    \item $S_1$ and $S_2$ are incompressible;
    \item $S_1$ has minimal genus;
    \item $S_1$ and $S_2$ intersect only in a disjoint union of simple loops;
    \item every intersection loop is not homotopically trivial on $S_1$;
    \item except for $K$, every intersection loop is \emph{essential} (i.e. not homotopically trivial and not boundary-parallel) on $S_2$.
\end{itemize}
\end{definition}

\begin{sublemma}[{\cite[Theorem 3.1]{MR1630563}}]\label{FenleySublemma}
    Suppose that $S$ is a minimal-genus Seifert surface for a \emph{hyperbolic} knot $K$. If a simple loop $\gamma \subset S$ is isotopic to $K$ in $\mathbb{S}^3 \setminus K$, then $\gamma$ is also isotopic to $K$ on $S$.
\end{sublemma}

\begin{lemma}\label{Sakuma3}
    Suppose that $S_1$ and $S_2$ are two distinct incompressible Seifert surfaces for an atoroidal knot $K$, and that $S_1$ has minimal genus. Then $S_2$ is isotopic to a third surface $S_3$ such that $(S_1,S_3)$ is a favourable pair, and the number of connected components of $S_1 \cap S_3$ is less than or equal to that of $S_1 \cap S_2$.
\end{lemma}

\begin{proof}
    This argument is essentially adapted from the proof of \cref{Sakuma2} by Sakuma--Shackleton. However, this lemma is under weaker assumptions, and \cref{Sakuma2} follows as a direct corollary.

    Since a torus knot admits a unique incompressible Seifert surface up to isotopy (see \cref{rmk: torus kakimizu is trivial}), we henceforth restrict attention to hyperbolic knots. By \cref{Sakuma1}, we may assume that $S_1 \cap S_2$ is a disjoint union of simple loops. Suppose that there exists a loop $\gamma \subset (S_1 \cap S_2) \setminus K$ that is inessential on $S_2$; that is, $\gamma$ is either null-homotopic or boundary-parallel on $S_2$.

    If $\gamma$ is null-homotopic on $S_2$, then by the incompressibility of $S_1$, it must also be null-homotopic on $S_1$. Since $\mathbb{S}^3 \setminus K$ is irreducible, we can perform an isotopy that pushes a small neighbourhood of the disk bounded by $\gamma$ on $S_2$ across the disk bounded by $\gamma$ on $S_1$, thereby eliminating the intersection loop $\gamma$.

    If $\gamma$ is boundary-parallel on $S_2$, without loss of generality, we may assume that $\gamma$ is the boundary-parallel loop on $S_2$ that is closest to the knot $K$. Since $S_1$ has minimal genus, by \cref{FenleySublemma}, the loop $\gamma$ must also be boundary-parallel on $S_1$. We may suppose that $\gamma$ and $K$ cobound two annuli $A_1 \subset S_1$ and $A_2 \subset S_2$. The union $A_1 \cup A_2$, denoted by $\mathbb{T}$, is a torus in $\mathbb{S}^3$ such that $K \subset \mathbb{T}$. Moreover, since $\gamma$ is chosen to be the boundary-parallel loop on $S_2$ closest to the knot $K$, the annulus $A_2$ intersects $A_1$ only along $K$ and $\gamma$. Hence, $\mathbb{T}$ is an embedded torus. Cutting $\mathbb{S}^3$ along $\mathbb{T}$ results in two components, $X_1$ and $X_2$. In what follows, we assume (without loss of generality) that $X_1$ is a solid torus (see, e.g., \cite[Section~1.1, Exercise~2]{hatcher2007notes}).
    
    Since $K$ is a hyperbolic knot, it is isotopic to the core of $X_1$. Otherwise, either $\mathbb{T}$ would be a standardly embedded torus, implying that $K$ is a torus knot; or $\mathbb{T}$ would be an essential torus, implying that $K$ is a satellite knot. Both cases contradict that the knot is hyperbolic. Hence, the pair $(X_1, A_1)$ can be given a product structure $\left(A_1 \times[0,1], A_1 \times\{0\}\right)$ (see \cref{fig: product structure}). We can perform an isotopy that pushes a small neighbourhood of $A_2$ on $S_2$ across $A_1$ on $S_1$, thereby eliminating the intersection loop $\gamma$.

    \begin{figure}[H]
    \centering
    \includegraphics[width=1.0\textwidth]{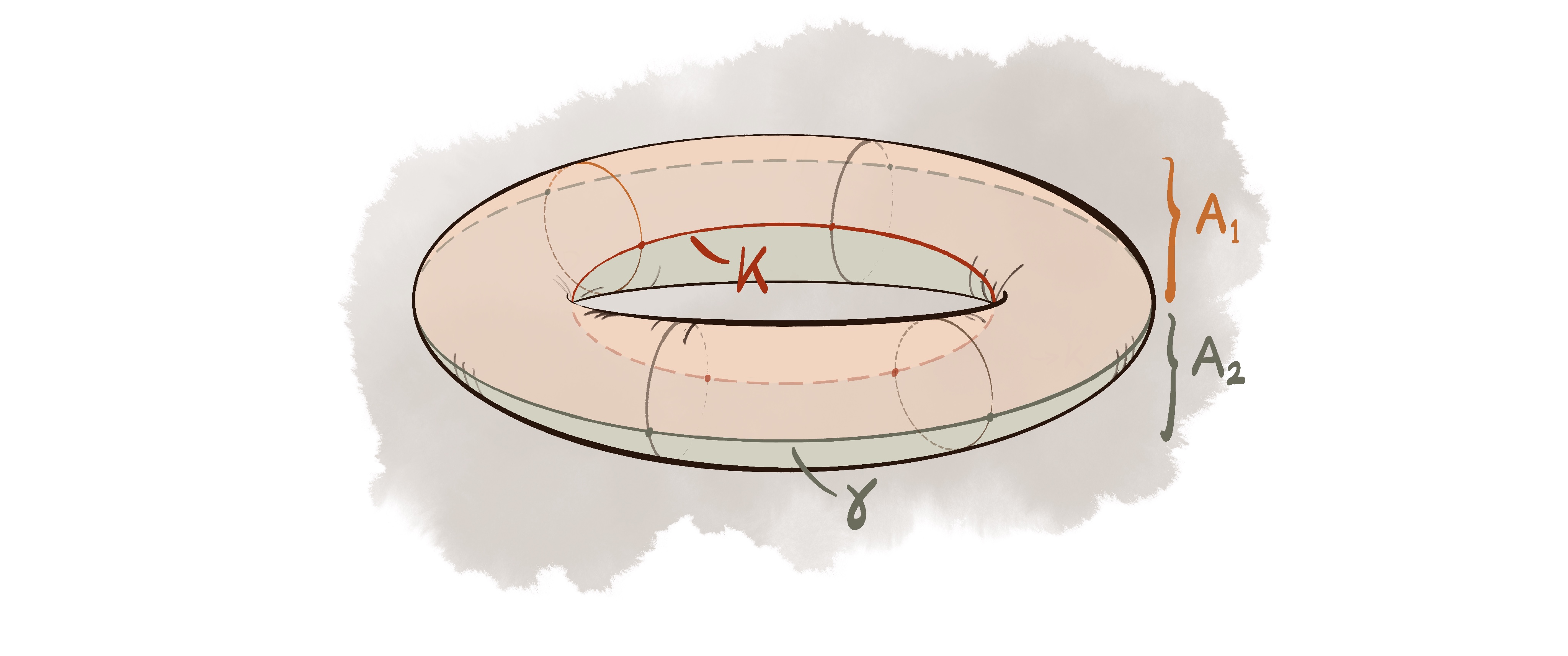}
    \caption{The pair $(X_1, A_1)$ can be given a product structure $\left(A_1 \times[0,1], A_1 \times\{0\}\right)$.}
    \label{fig: product structure}
    \end{figure}

    Symmetrically, if there is an intersection loop that is null-homotopic on $S_1$, then by an isotopy of $S_2$ we can eliminate this loop.
    
    After performing all the above isotopies of $S_2$, denote the resulting surface by $S_3$. Then every component of $S_1 \cap S_3 \setminus K$ is essential on $S_3$ and is not homotopically trivial on $S_1$. Therefore, the pair $(S_1,S_3)$ is favourable.
\end{proof}

As an immediate consequence of this lemma, we obtain the following corollary. In fact, this result already appears as \cite[Lemma~3.2]{MR2531146}.

\begin{corollary}[{\cite[Lemma 3.2]{MR2531146}}]\label{Sakuma2}
    Suppose that $S_1$ and $S_2$ are two distinct minimal-genus Seifert surfaces for an atoroidal knot $K$. Then $S_2$ is isotopic to a third surface $S_3$ such that both $(S_1,S_3)$ and $(S_3,S_1)$ are favourable pairs (that is, every intersection loop is essential on both $S_1$ and $S_3$), and the number of connected components of $S_1 \cap S_3$ is less than or equal to that of $S_1 \cap S_2$.
\end{corollary}

\begin{remark}
    In the definition of a favourable pair, we require that except for $K$, every intersection loop is essential on $S_2$, in particular, not boundary-parallel on $S_2$. This requirement is necessary because this property is used essentially in \cref{lem: not boundary-parallel} and \cref{pro: middle length}. Moreover, since it suffices for only one of the two surfaces to satisfy this condition, the definition of a favourable pair is chosen to be just strong enough for our purposes.

    To ensure that this property holds up to isotopy, we must require that $S_1$ has minimal genus; otherwise, we cannot always arrange for $S_2$ to satisfy the above condition. Indeed, \cite[Theorem 1.1]{MR2031451} provides infinitely many examples of incompressible but non-minimal-genus Seifert surfaces for a hyperbolic knot which contain a loop $\gamma$ that is not boundary-parallel on $S_1$ but is boundary-parallel in $\mathbb{S}^3 \setminus K$. Such surfaces are referred to as \emph{accidental}. Consequently, if $S_1$ does not have minimal genus and is such an accidental surface, and if $S_2$ intersects $S_1$ along the loop $\gamma$, then, provided that $S_2$ is not accidental, this loop may become parallel to $K$ on $S_2$.
\end{remark}

\section{A Technical Lemma}

In this section, we introduce a technical lemma that will be used in the proof of \cref{thm:euler-inequality}.

\begin{lemma}\label{lem: not boundary-parallel}
    Let $K$ be an atoroidal knot, and let $(S_1,S_2)$ be a favourable pair. Suppose that there exist two connected components $C_1$ and $C_2$ of $S_1 \cap S_2$, both of which are simple loops, such that $C_1$ and $C_2$ are isotopic on $S_1$ as well as on $S_2$. For $i=1,2$, let $A_i \subset S_i$ be the annulus bounded by $C_1$ and $C_2$. We define $\mathbb{T} := A_1 \cup A_2$, which is a torus in $\mathbb{S}^3 \setminus K$. If $\mathbb{T}$ is embedded, then $\mathbb{T}$ cannot be boundary-parallel in $\mathbb{S}^3 \setminus K$.
\end{lemma}

\begin{proof}
    We argue by contradiction. Assume that $\mathbb{T}$ is boundary-parallel. Cutting $\mathbb{S}^3$ along $\mathbb{T}$ yields two components, $X_1$ and $X_2$. Without loss of generality, we may assume that the knot $K$ lies entirely in $X_1$, which is a solid torus. Let $R$ denote the connected component of $S_2 \cap X_1$ that contains $K$.

    The inclusion $i: R \hookrightarrow X_1$ induces a homomorphism $i_* \colon H_1(R) \longrightarrow H_1(X_1)$. Since $K$ is the core of $X_1$, the class $i_*([K])$ generates $H_1(X_1) \cong \mathbb{Z}$. Moreover, since $[K] = [R \cap \mathbb{T}] \in H_1(R)$, it follows that $i_*([R \cap \mathbb{T}])$ also generates $H_1(X_1)$.
    
    Because the $1$-manifold $R \cap \mathbb{T}$ lies on the annulus $A_1$ and $C_1$, $C_2$ cobound $A_1$, the homology class $i_*([R \cap A_1])$ must be a multiple of $[C_1] = [C_2]$ in $H_1(X_1)$. Combined with the fact that $i_*([R \cap \mathbb{T}])$ generates $H_1(X_1)$, this implies that both $[C_1]$ and $[C_2]$ are equal to $i_*([R \cap A_1])$ and they generate $H_1(X_1)$. Consequently, the pair $(X_1, A_1)$ admits a product structure $(X_1, A_1) \cong \left(A_1 \times [0,1],\, A_1 \times \{0\}\right)$.

    Then we will prove that $R$ is an annulus. Since $S_2$ is incompressible and $R$ is connected and lies in the solid torus $X_1$, the surface $R$ can have neither positive genus nor more than two boundary components. This is because $\pi_1(X_1 \setminus K) \cong \mathbb{Z}^2$ is abelian. If $R$ had positive genus or more than two boundary components, then $\pi_1(R \setminus K)$ would be non-abelian. Consequently, the homomorphism $i_*$ on fundamental groups induced by the inclusion $i: R \setminus K \hookrightarrow X_1 \setminus K$ would not be injective. Hence, there exists a nontrivial element $[\gamma] \in \pi_1(R \setminus K)$ such that $i_*([\gamma]) = 0$. In other words, $\gamma$ is null-homotopic in $X_1 \setminus K$, and therefore also null-homotopic in $\mathbb{S}^3 \setminus K$, which contradicts the incompressibility of $S_2$.

    Moreover, since $R$ cannot be a disk (otherwise $R$ and $A_2$ would lie in different connected components of $S_2$, contradicting the connectedness of $S_2$), we conclude that $R$ is an annulus. This implies that $K$ and $C_3 := R \cap A_1 \subset S_1 \cap S_2$ are isotopic on $S_2$, which contradicts the fact that except for $K$, every intersection loop on $S_2$ is not boundary-parallel (see \cref{defi: Favourable pair}).
\end{proof}

\begin{remark}
    In \cref{lem: not boundary-parallel}, the assumption that $S_1$ has minimal genus can in fact be removed (although this assumption appears in the definition of a favourable pair), since it is not used anywhere in the proof.
\end{remark}

\section{Infinite Cyclic Cover} \label{Diameter Kakimizu section: Infinite Cyclic Cover}

\subsection{Definition of infinite cyclic cover}

\begin{definition}[Infinite cyclic cover]\label{infinitecycliccover}
    Let $K \subset \mathbb{S}^3$ be a knot with open tubular neighbourhood $W(K)$. The infinite cyclic cover $\widetilde{E}$ of the knot exterior $E = \mathbb{S}^3 \setminus W(K)$ is the covering space associated to the kernel of the abelianisation $\mathrm{ab}:\pi_1(E) \rightarrow H_1(E) \cong \mathbb{Z}$.
\end{definition}

\begin{definition}[Deck transformation]\label{Decktransformation}
    Let $p: \widetilde{E} \rightarrow E$ be a covering map. The \emph{group of deck transformations} $\operatorname{Deck}(\widetilde{E}/E)$ is the group of homeomorphisms $\varphi: \widetilde{E} \rightarrow \widetilde{E}$ such that $p \circ \varphi = p$. By \cref{infinitecycliccover}, the group $\operatorname{Deck}(\widetilde{E}/E)$ is isomorphic to $\mathbb{Z}$. We denote by $\tau: \widetilde{E} \rightarrow \widetilde{E}$ a generator of $\operatorname{Deck}(\widetilde{E}/E)$.
\end{definition}

\begin{definition}[Lifted Seifert surfaces]\label{lift}
    Let $S$ be a Seifert surface for $K$. From now on, when no confusion arises, we do not distinguish between $S$ and $S \setminus W(K)$, and we denote both simply by $S$. When lifted to the infinite cyclic cover $\widetilde{E}$, we pick one lift as $(\widetilde{S})_0$, and for any integer $i\in\mathbb{Z}$, define $(\widetilde{S})_i = \tau^i((\widetilde{S})_0)$, where $\tau$ is the generator of the deck transformation group. So $(\widetilde{S})_i:i\in\mathbb{Z}$ are all lifts of $S$.

    The collection of surfaces $\{(\widetilde{S})_i\}_{i\in\mathbb{Z}}$ divides $\widetilde{E}$ into countably many $3$-dimensional regions $\{\widetilde{E}_i(S)\}_{i\in\mathbb{Z}}$, where each $\widetilde{E}_i(S)$ is divided by $(\widetilde{S})_i$ and $(\widetilde{S})_{i+1}$. (See \cref{fig: lifted-surfaces} for illustration.)
\end{definition}

\subsection{Infinite cyclic cover decomposition}

\begin{definition}\label{intersection_seifert_surface}
    Let $K$ be a knot in $\mathbb{S}^3$, and let $S_1$ and $S_2$ be two distinct Seifert surfaces for $K$ such that $S_1 \cap S_2$ is a disjoint union of simple loops. For any integers $i$ and $j$, define $\widetilde{C}_{i,j} := (\widetilde{S_1})_i \cap (\widetilde{S_2})_j$ which we call the \emph{set of intersection loops}. By the connectedness of $(\widetilde{S_2})_0$ and the fact that the number of components of the intersection of $S_1$ and $S_2$ is finite, there exists a finite sequence of \emph{consecutive} integers $\{ h \in \mathbb{Z} \mid \widetilde{C}_{h,0} \neq \emptyset \}$, or this set is empty if $S_1$ and $S_2$ are disjoint.
\end{definition}

  \begin{figure}[H]
  \centering
  \includegraphics[width=0.9\textwidth]{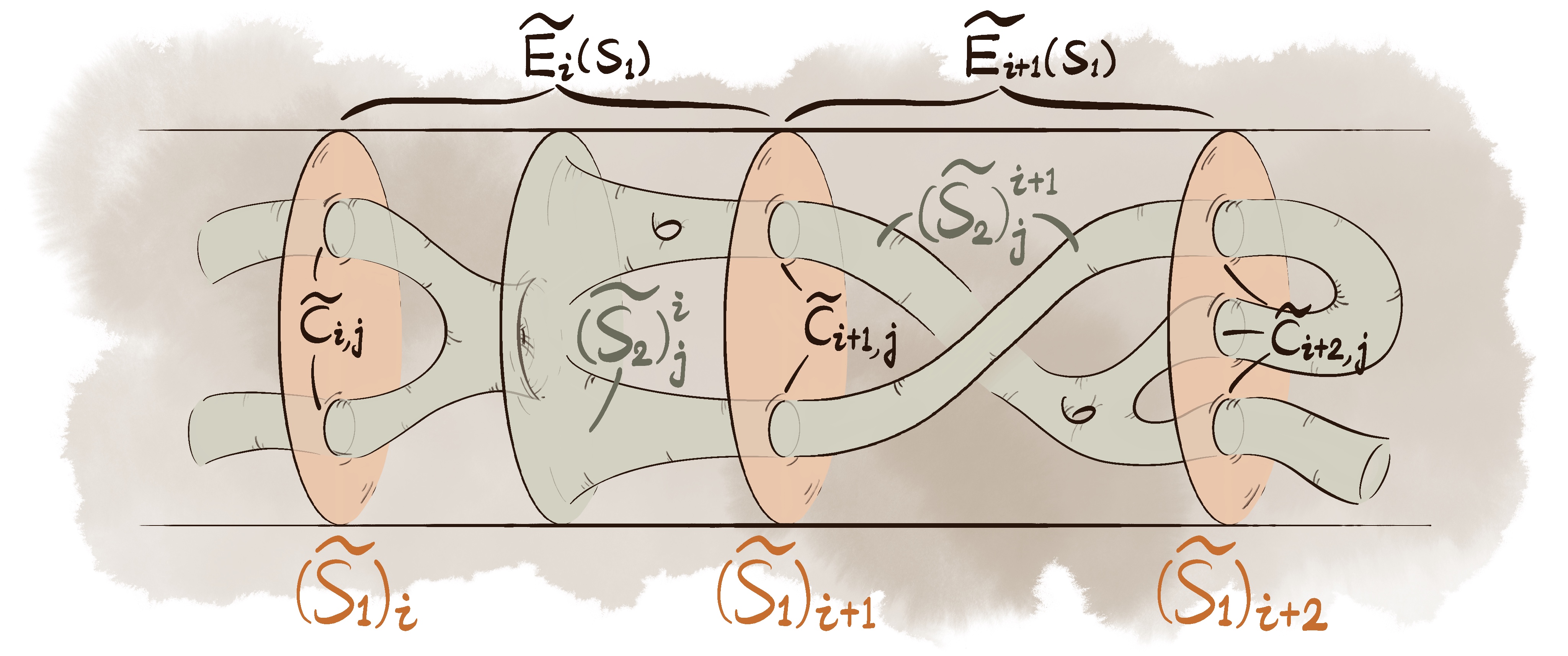}
  \caption{A lift of the Seifert surface $S_2$.}
  \label{fig: lifted-surfaces}
  \end{figure}

\begin{definition}\label{intersection_seifert_surface2}
    Let $(\widetilde{S_1})_i^j \subset (\widetilde{S_1})_i$ be defined as $(\widetilde{S_1})_i \cap \widetilde{E}_j(S_2)$, which is a subsurface of the surface $(\widetilde{S_1})_i$, and similarly, let $(\widetilde{S_2})_j^i \subset (\widetilde{S_2})_j$ be defined as $(\widetilde{S_2})_j \cap \widetilde{E}_i(S_1)$.
\end{definition}

\begin{definition}
Since $S_2$ is a Seifert surface and hence connected, the set $\left\{\, h \in \mathbb{Z} \;\middle|\; (\widetilde{S_2})_0^h \neq \emptyset \,\right\}$is a consecutive interval in $\mathbb{Z}$. We denote this interval by $\{\, h_1, h_1+1, \cdots, h_2 \,\}$.
\end{definition}

\begin{proposition}
    $S_1 \cap S_2 = \bigcup_{h=h_{1}}^{h_{2}} p \left( \widetilde{C}_{h,0} \right)$.
\end{proposition}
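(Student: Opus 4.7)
The proof is essentially an exercise in unwinding definitions. My plan is to use the covering map $p$ together with the $\mathbb{Z}$-action by deck transformations to rewrite $S_1 \cap S_2$ as a union of projections of the lifted intersections $\widetilde{C}_{h,0}$, and then prune the empty terms.

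First, I would push the intersection down from $\widetilde{E}$. Because $p$ is surjective, the elementary identity $p\bigl(p^{-1}(A) \cap p^{-1}(B)\bigr) = A \cap B$ applies, and by \cref{lift} we have $p^{-1}(S_r) = \bigcup_{k \in \mathbb{Z}}(\widetilde{S_r})_k$ for $r = 1, 2$. Distributing intersection over union gives
$$S_1 \cap S_2 \;=\; p\bigl(p^{-1}(S_1) \cap p^{-1}(S_2)\bigr) \;=\; \bigcup_{i,j \in \mathbb{Z}} p(\widetilde{C}_{i,j}).$$

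Second, I would collapse this double indexing to a single index via the deck transformation $\tau$. Since $(\widetilde{S_r})_k = \tau^k\bigl((\widetilde{S_r})_0\bigr)$ by \cref{lift}, applying $\tau^{-j}$ carries $(\widetilde{S_1})_i$ to $(\widetilde{S_1})_{i-j}$ and $(\widetilde{S_2})_j$ to $(\widetilde{S_2})_0$, hence $\tau^{-j}(\widetilde{C}_{i,j}) = \widetilde{C}_{i-j,0}$. Because $p \circ \tau^{-j} = p$, this gives $p(\widetilde{C}_{i,j}) = p(\widetilde{C}_{i-j,0})$, so the double union reduces to $\bigcup_{h \in \mathbb{Z}} p(\widetilde{C}_{h,0})$. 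Finally, \cref{intersection_seifert_surface} tells us that $\widetilde{C}_{h,0}$ is nonempty precisely for $h \in \{h_1,\ldots,h_2\}$ (with both sides empty if $S_1$ and $S_2$ are disjoint), so the union truncates to $\bigcup_{h=h_1}^{h_2} p(\widetilde{C}_{h,0})$, as required.

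There is no substantive obstacle. The only point that deserves a sanity check is that $p^{-1}(S_r)$ really decomposes as the disjoint union $\bigsqcup_k (\widetilde{S_r})_k$, which is implicit in the setup of the infinite cyclic cover (cf.~\cref{infinitecycliccover,lift}) since $S_r$ is a connected minimal-genus Seifert surface and its preimage in $\widetilde{E}$ is freely permuted by $\langle\tau\rangle \cong \mathbb{Z}$.
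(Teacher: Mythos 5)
Your argument is correct and is exactly the routine unwinding the paper has in mind: the paper states this proposition without proof, treating it as immediate from the definition of the lifts and the deck-transformation identity $p(\widetilde{C}_{i,j}) = p(\widetilde{C}_{i-j,0})$ (the content of \cref{pro: shift}), which is precisely what you use. Your sanity check that $p^{-1}(S_r) = \bigcup_{k}(\widetilde{S_r})_k$ is the only point with any content, and it holds as you say since the connected Seifert surface lifts to $\widetilde{E}$ and its preimage is the $\langle\tau\rangle$-orbit of one lift.
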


\begin{proposition}
    \label{pro: shift}
    $p(\widetilde{C}_{i,j}) = p(\widetilde{C}_{i+k,j+k}), \forall k \in \mathbb{Z}$.
\end{proposition}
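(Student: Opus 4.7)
The plan is to exploit the equivariance of the covering map $p$ under the deck transformation $\tau$. Recall from \cref{Deckransformation} that $p \circ \tau = p$, hence by induction $p \circ \tau^{k} = p$ for every $k \in \mathbb{Z}$. Also, by \cref{lift}, each lifted family is generated from a single lift by iterating $\tau$: we have $(\widetilde{S_{\ell}})_{m} = \tau^{m}((\widetilde{S_{\ell}})_{0})$ for $\ell = 1,2$ and all $m \in \mathbb{Z}$.

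First I would rewrite $\widetilde{C}_{i+k,j+k}$ using these identities. Since $\tau^{k}$ is a homeomorphism, it commutes with set-theoretic intersection, so
\begin{equation*}
\widetilde{C}_{i+k,j+k} = (\widetilde{S_{1}})_{i+k} \cap (\widetilde{S_{2}})_{j+k} = \tau^{k}\!\left((\widetilde{S_{1}})_{i}\right) \cap \tau^{k}\!\left((\widetilde{S_{2}})_{j}\right) = \tau^{k}\!\left((\widetilde{S_{1}})_{i} \cap (\widetilde{S_{2}})_{j}\right) = \tau^{k}(\widetilde{C}_{i,j}).
\end{equation*}
Applying $p$ to both sides and invoking $p \circ \tau^{k} = p$ then gives $p(\widetilde{C}_{i+k,j+k}) = p(\tau^{k}(\widetilde{C}_{i,j})) = p(\widetilde{C}_{i,j})$, which is the desired identity.

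There is essentially no obstacle here; the statement is a direct unwinding of the definitions of the lifted surfaces and of a deck transformation. The only tiny point worth stating explicitly is that the relation $\tau^{k}((\widetilde{S_{\ell}})_{m}) = (\widetilde{S_{\ell}})_{m+k}$ follows from the group-law $\tau^{k} \circ \tau^{m} = \tau^{m+k}$, so that the two families of lifts are jointly $\tau$-equivariant. Once this is noted, the rest is a one-line computation as above.
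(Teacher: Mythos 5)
Your argument is correct and is essentially the paper's own proof, which likewise observes that $\widetilde{C}_{i+k,j+k} = \tau^{k}(\widetilde{C}_{i,j})$ and concludes via $p \circ \tau^{k} = p$. You simply spell out the intermediate steps (equivariance of the lifts and the fact that a homeomorphism commutes with intersection) that the paper leaves implicit.
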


\begin{proof}
    By definition, $\widetilde{C}_{i+k,j+k} = \tau^{k}(\widetilde{C}_{i,j})$. The result follows.
\end{proof}

\begin{corollary}
    $\left\{h \in \mathbb{Z} \mid \widetilde{C}_{0,h} \neq \emptyset \right\} = \left\{-h \mid h \in \mathbb{Z}, \widetilde{C}_{h,0} \neq \emptyset \right\}$.
\end{corollary}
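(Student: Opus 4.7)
The plan is to deduce the corollary directly from \cref{pro: shift} (or, more precisely, from its proof, which shows the stronger fact $\widetilde{C}_{i+k,j+k} = \tau^{k}(\widetilde{C}_{i,j})$ as subsets of $\widetilde{E}$). Since deck transformations are homeomorphisms, this identity tells us that $\widetilde{C}_{i,j}$ is empty if and only if $\widetilde{C}_{i+k,j+k}$ is empty, for every $k \in \mathbb{Z}$.

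The main step is to apply this equivalence with $k = -h$ to obtain
\[
\widetilde{C}_{0,h} \neq \emptyset \iff \widetilde{C}_{-h,0} \neq \emptyset.
\]
Hence $\{h \in \mathbb{Z} \mid \widetilde{C}_{0,h} \neq \emptyset\}$ is precisely the image of $\{h \in \mathbb{Z} \mid \widetilde{C}_{h,0} \neq \emptyset\}$ under the map $h \mapsto -h$.

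By \cref{intersection_seifert_surface}, the second set equals $\{h_1, h_1+1, \dots, h_2\}$. Negating and reversing (using $h_1 \leq h_2$, so $-h_2 \leq -h_1$), we obtain the desired equality
\[
\bigl\{h \in \mathbb{Z} \mid \widetilde{C}_{0,h} \neq \emptyset \bigr\} = \{-h_2, -h_2 + 1, \dots, -h_1\}.
\]

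There is no real obstacle here; the statement is essentially a bookkeeping consequence of the $\mathbb{Z}$-equivariance of the intersections under the deck action. The only thing to watch is that when $S_1$ and $S_2$ are disjoint, both sets are empty, and the claim holds vacuously, so the argument is uniform in that degenerate case as well.
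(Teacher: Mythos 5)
Your proof is correct and follows essentially the route the paper intends: the corollary is an immediate consequence of the equivariance $\widetilde{C}_{i+k,j+k} = \tau^{k}(\widetilde{C}_{i,j})$ from \cref{pro: shift} together with the definition of $h_1, h_2$ in \cref{intersection_seifert_surface}, which is exactly how you argue (the paper leaves this step implicit). Your remark about the degenerate disjoint case is a harmless bonus; nothing further is needed.
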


\begin{proposition}\label{partial}
    $\partial(\widetilde{S_1})_i^j = \widetilde{C}_{i,j} \cup \widetilde{C}_{i,j+1}$ and $\partial(\widetilde{S_2})_j^i = \widetilde{C}_{i,j} \cup \widetilde{C}_{i+1,j}$.
\end{proposition}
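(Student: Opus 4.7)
The plan is to deduce both equalities directly from the definitions in \cref{intersection_seifert_surface,intersection_seifert_surface2} together with transversality. Since $S_1$ and $S_2$ are assumed to be in minimal position, their intersection is a disjoint union of loops, and lifting preserves transversality; hence for every pair $(i,k)$ the lifts $(\widetilde{S_1})_i$ and $(\widetilde{S_2})_k$ meet transversely in the disjoint union of circles $\widetilde{C}_{i,k}$. Moreover, by \cref{lift}, the region $\widetilde{E}_j(S_2)$ is the closed layer of $\widetilde{E}$ bounded by $(\widetilde{S_2})_j$ and $(\widetilde{S_2})_{j+1}$, so its frontier in $\widetilde{E}$ is $(\widetilde{S_2})_j \cup (\widetilde{S_2})_{j+1}$; likewise for $\widetilde{E}_i(S_1)$.

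For the first equality, I would view $(\widetilde{S_1})_i^j = (\widetilde{S_1})_i \cap \widetilde{E}_j(S_2)$ as a compact subsurface of $(\widetilde{S_1})_i$ obtained by cutting along the two curve systems $\widetilde{C}_{i,j}$ and $\widetilde{C}_{i,j+1}$. A point of $(\widetilde{S_1})_i^j$ lies on its (new) boundary precisely when it is a point of $(\widetilde{S_1})_i$ meeting the frontier of $\widetilde{E}_j(S_2)$. By transversality this frontier locus is exactly
\[
(\widetilde{S_1})_i \cap \bigl((\widetilde{S_2})_j \cup (\widetilde{S_2})_{j+1}\bigr) = \widetilde{C}_{i,j} \cup \widetilde{C}_{i,j+1},
\]
which gives the first identity. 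The same argument, with the roles of $S_1$ and $S_2$ exchanged, proves $\partial(\widetilde{S_2})_j^i = \widetilde{C}_{i,j} \cup \widetilde{C}_{i+1,j}$: the relevant region $\widetilde{E}_i(S_1)$ is bounded by $(\widetilde{S_1})_i$ and $(\widetilde{S_1})_{i+1}$, so the two contributing intersection curves are $\widetilde{C}_{i,j}$ and $\widetilde{C}_{i+1,j}$.

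The only genuine subtlety is the convention for $\partial$: since $(\widetilde{S_1})_i$ already carries a boundary on the preimage of $\partial W(K)$, the identity $\partial(\widetilde{S_1})_i^j = \widetilde{C}_{i,j} \cup \widetilde{C}_{i,j+1}$ should be read as describing the new boundary created by the decomposition, i.e.\ the portion of $\partial(\widetilde{S_1})_i^j$ lying in the interior of $\widetilde{E}$. I would state this convention explicitly at the outset; once it is in place, the proposition is an immediate consequence of the transverse-intersection description above, and no further analysis (in particular no use of atoroidality or of \cref{Sakuma1,Sakuma2}) is needed.
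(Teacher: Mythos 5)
Your argument is correct and is essentially the paper's own (the paper states \cref{partial} as an immediate consequence of \cref{intersection_seifert_surface,intersection_seifert_surface2}, offering no further proof): you simply unpack the definitions, using transversality and the fact that $\widetilde{E}_j(S_2)$ is the layer bounded by $(\widetilde{S_2})_j$ and $(\widetilde{S_2})_{j+1}$. Your explicit caveat that the stated equality refers to the boundary lying in the interior of $\widetilde{E}$ (excluding the part of $\partial(\widetilde{S_1})_i$ on $\partial\widetilde{E}$) is a sensible clarification, consistent with how the paper later speaks of ``boundary branches lying in $\partial\widetilde{E}$.''
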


\begin{proposition}\label{union}
   $\bigcup_j (\widetilde{S_1})_i^j = (\widetilde{S_1})_i$ and $\bigcup_i (\widetilde{S_2})_j^i = (\widetilde{S_2})_j$.
\end{proposition}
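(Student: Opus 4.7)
The plan is to unpack Definition \ref{intersection_seifert_surface2} and reduce the claim to the tautology that the regions $\{\widetilde{E}_j(S_2)\}_{j\in\mathbb{Z}}$ cover all of $\widetilde{E}$. First I would recall from Definition \ref{lift} that the lifts $\{(\widetilde{S_2})_j\}_{j\in\mathbb{Z}}$ subdivide $\widetilde{E}$ into the regions $\widetilde{E}_j(S_2)$, each bounded by the consecutive lifts $(\widetilde{S_2})_j$ and $(\widetilde{S_2})_{j+1}$, so that $\bigcup_{j\in\mathbb{Z}} \widetilde{E}_j(S_2) = \widetilde{E}$. I would also note, in passing, that Proposition \ref{partial} forces the convention that each $\widetilde{E}_j(S_2)$ is taken as a closed region including its two bounding lifts, so that the pieces $(\widetilde{S_1})_i^j$ carry the boundary circles $\widetilde{C}_{i,j}\cup\widetilde{C}_{i,j+1}$.

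Given this, the first identity is immediate by distributivity of intersection over union:
\[
\bigcup_{j\in\mathbb{Z}} (\widetilde{S_1})_i^j \;=\; \bigcup_{j\in\mathbb{Z}} \bigl[(\widetilde{S_1})_i \cap \widetilde{E}_j(S_2)\bigr] \;=\; (\widetilde{S_1})_i \cap \bigcup_{j\in\mathbb{Z}} \widetilde{E}_j(S_2) \;=\; (\widetilde{S_1})_i \cap \widetilde{E} \;=\; (\widetilde{S_1})_i.
\]
The second identity follows by an identical argument with the roles of $S_1$ and $S_2$ (and the corresponding indices) exchanged.

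There is no genuine obstacle here; the proposition is essentially a book-keeping consequence of the definitions once the closed-region convention for $\widetilde{E}_j(S_2)$ is fixed. The only point worth flagging explicitly in the write-up is that the covering $\bigcup_j \widetilde{E}_j(S_2) = \widetilde{E}$ is a part of Definition \ref{lift} rather than something requiring a separate topological argument, so the verification reduces to a one-line chain of set-theoretic equalities.
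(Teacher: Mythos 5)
Your proposal is correct and matches the paper's intent: the paper states this proposition (together with \cref{partial}) as an immediate consequence of \cref{lift} and \cref{intersection_seifert_surface2}, offering no separate argument, and your one-line set-theoretic chain is exactly the justification implicit there. The remark about the closed-region convention is a reasonable clarification but not a departure from the paper's approach.
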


\begin{figure}[H]
\centering
\includegraphics[width=0.9\textwidth]{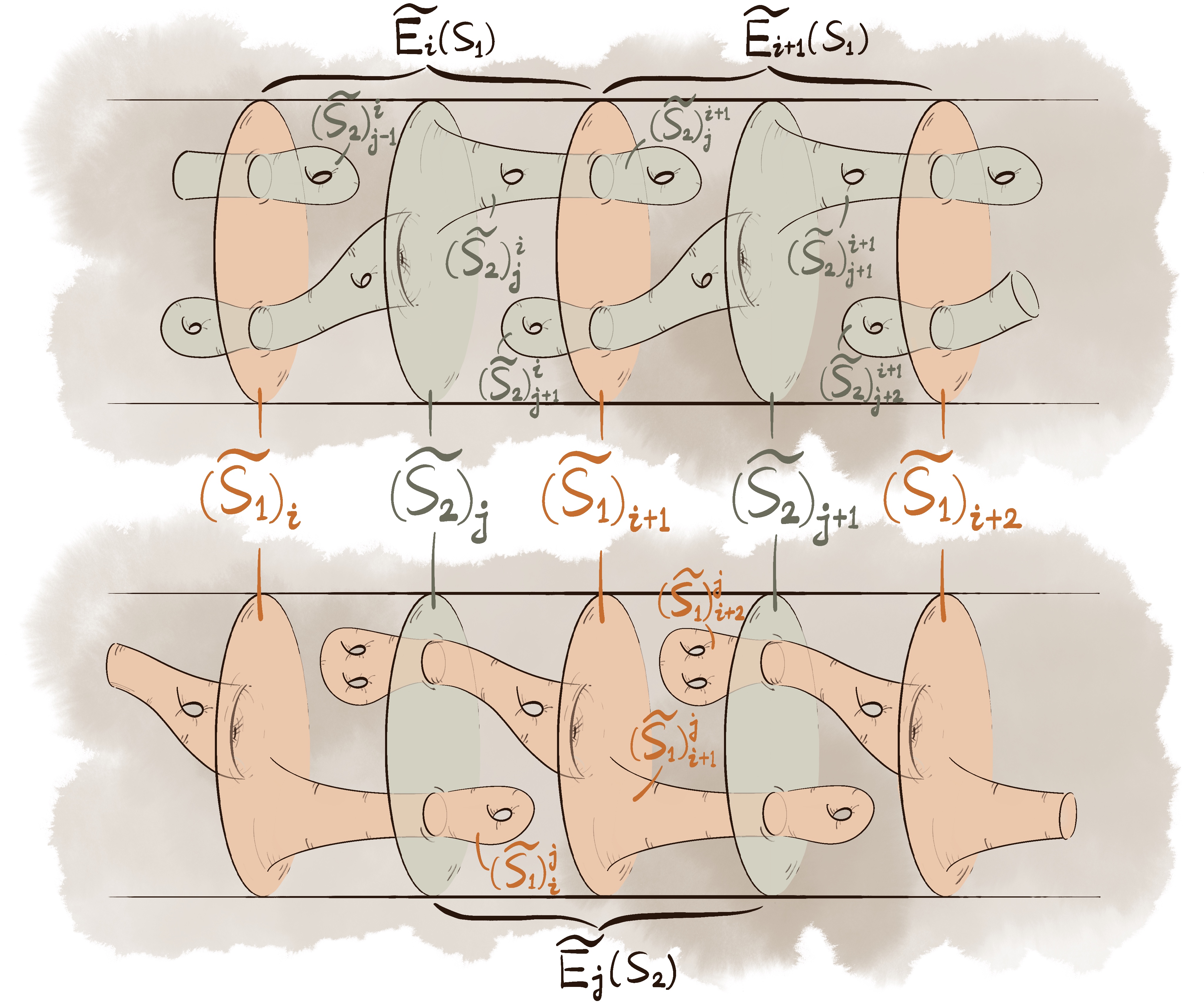}
\caption{The relative position of the lifts of $S_1$ and $S_2$.}
\label{fig: corresponding}
\end{figure}

\subsection{Kakimizu's characterisation}

\begin{definition}[Kakimizu's characterisation, \cite{MR2531146}]
    \label{defi: Kakimizu distance}
    Consider two Seifert surfaces $S_1$ and $S_2$. Define
    \begin{align}
        d_*(S_1,S_2) := \# \left\{ h \in \mathbb{Z} \;\middle|\; (\widetilde{S_2})_0^h \neq \emptyset \right\} \notag
    \end{align}
    if $[S_1] \neq [S_2]$, and define $d_*(S_1,S_2) := 0$ if $[S_1] = [S_2]$. 
    
    For two isotopy classes $[S_1]$ and $[S_2]$, we further define
    \begin{align}
        d_*([S_1],[S_2]) := \min_{S_i \in [S_i],i=1,2} d_*(S_1,S_2). \notag
    \end{align}
\end{definition}

\begin{lemma}[{\cite[Proposition 1.4(iii)]{MR1177053}}]
\label{lem: Kakimizu}
    For any Seifert surfaces $S_1, S_2, S_3$, we have
    \begin{align}
        d_*(S_1,S_3) \le d_*(S_1,S_2) + d_*(S_2,S_3). \notag
    \end{align}
\end{lemma}


\begin{theorem}
    \label{kakimizu1}
    \begin{enumerate}
        \item (\cite[Proposition 3.1]{MR1177053}) For any two vertices $[S_1]$ and $[S_2]$ with both genus at most $\ell$, we have
    \begin{align}
        d_\infty([S_1],[S_2]) = d_*([S_1],[S_2]).
    \end{align}
        \item Moreover, if $S_1$ has minimal genus, then we have
    \begin{align}
        d_\ell([S_1],[S_2]) = d_\infty([S_1],[S_2]) = d_*([S_1],[S_2]).
    \end{align}
    \end{enumerate}
\end{theorem}

\begin{proof}
    Kakimizu \cite{MR1177053} proved the first assertion of this theorem, while the proof of the second assertion essentially follows Kakimizu's original argument \cite{MR1177053}. It is straightforward to extend this argument to the setting of $IS_\ell(K)$ provided that $S_1$ has minimal genus. Indeed, Kakimizu constructed a path in $IS_{\infty}(K)$ connecting $[S_1]$ and $[S_2]$. Since $S_1$ has minimal genus, every Seifert surface appearing along this path has genus at most $\ell$, and hence the same path lies entirely in $IS_\ell(K)$. For completeness, we present this proof below to make this point explicit.
    
    In the following proof, we shall simply write $d$ to denote either $d_\ell$ or $d_\infty$. In the following, we denote $d_* = d_*([S_1],[S_2])$ and $d = d([S_1],[S_2])$, and assume that the surfaces $S_1, S_2$ satisfy $d_*(S_1,S_2) = d_*([S_1],[S_2])$. 
    
    \textbf{First, we prove that $d_*\leq d$.} Since $d([S_1],[S_2])=d$, we may take a sequence of Seifert surfaces $S_1 = R_0, R_1, \cdots, R_d = S_2$ such that for each $0 \leq i \leq d-1$, $R_i$ and $R_{i+1}$ are disjoint in $S^3 \setminus K$. Since $d(R_i,R_{i+1}) = 1$, the surfaces $R_i$ and $R_{i+1}$ are disjoint, and hence $d_*(R_i,R_{i+1}) = 1$. Therefore, we obtain    
        \begin{align}
        d &= d(R_0,R_{1})+\cdots+d(R_{d-1},R_{d})=1+\cdots+1 \notag\\
        &= d_*(R_0,R_{1})+\cdots+d_*(R_{d-1},R_{d})\geq d_*(R_0,R_{d}) = d_* , \notag
        \end{align}
    where the last inequality follows from \cref{lem: Kakimizu}.


    \textbf{Second, we prove that $d_* \geq d$.}
    \begin{itemize}
    \item We proceed by induction on $d_*$. If $d_* = 1$, then the surfaces $S_1$ and $S_2$ are disjoint, and hence clearly $d = 1$. Now assume $d_* \geq 2$.
    
    \item We construct a Seifert surface $S_2'$ such that $d_*(S_1, S_2') \leq d_* - 1$ and $d_*(S_2, S_2') = 1$, as follows.
    
    Assume that $\left\{ h \in \mathbb{Z} \;\middle|\; (\widetilde{S_2})_0^h \neq \emptyset \right\} = \{h_1,h_1+1,\cdots,h_2\}$, that is, $(\widetilde{S_2})_0$ intersects $\widetilde{E}_{h_1}(S_1), \cdots, \widetilde{E}_{h_2}(S_1)$. Let $\partial (\widetilde{S_2})_0 \subset \widetilde{E}_{k}(S_1)$ for some $h \leq k \leq h_2$, where $h_1 \neq h_2$ (since $d_* \geq 2$). If $k < h_2$, define
    \begin{align}
        (\widetilde{S_2''})_0 := \left( (\widetilde{S_2})_0 \setminus \widetilde{E}_{h_2}(S_1) \right) \cup \left( (\widetilde{S_1})_{h_2} \cap \widetilde{E}_{-1}(S_2) \right). \notag
    \end{align}
    Otherwise, if $k = h_2$, then $k > h_1$, we may define
    \begin{align}
        (\widetilde{S_2''})_0 := \left( (\widetilde{S_2})_0 \setminus \widetilde{E}_{h_1}(S_1) \right) \cup \left( (\widetilde{S_1})_{h_1+1} \cap \widetilde{E}_{0}(S_2) \right). \notag
    \end{align}
    Then, after a small perturbation isotopy, $(\widetilde{S_2''})_0$ intersects $\widetilde{E}_{h_1}(S_1), \cdots, \widetilde{E}_{h_2-1}(S_1)$ if $k < h_2$ (or $\widetilde{E}_{h_1+1}(S_1), \cdots, \widetilde{E}_{h_2}(S_1)$ if $k = h_2$). 
    
    Let $S_2'' = p\bigl( (\widetilde{S_2''})_0 \bigr)$, where $p$ denotes the covering projection. Since $(\widetilde{S_2''})_0$ can be isotoped into $\widetilde{E}_{-1}(S_2)$ if $k < h_2$ (or $\widetilde{E}_{0}(S_2)$ if $k = h_2$), the surfaces $S_2$ and $S_2''$ are disjoint. Hence $d_*(S_2, S_2'') = 1 = d(S_2, S_2'')$.

    \item Note that we cannot directly assert that $S_2''$ is incompressible. In what follows, we perform a sequence of operations to modify $S_2''$ into an incompressible Seifert surface $S_2'$. If $S_2''$ is compressible, then there exists a loop $\delta \subset S_2''$ that is not homotopically trivial on $S_2''$ and bounds a disk $D$ in $\mathbb{S}^3 \setminus S_2''$.

    We then cut $S_2''$ along $\delta$ and cap off the two resulting boundary components with copies of $D$, producing a surface that may be disconnected. Taking the connected component containing the boundary $K$, we obtain a Seifert surface, which we denote by $S_2'''$, whose genus is reduced by at least one.

    Observe that we may adjust the disk $D$ so that $S_2'''$ is disjoint from $S_2$, i.e.,
    \begin{align}
        d_{*}(S_2,S'''_2) = 1. \notag
    \end{align}
    Indeed, suppose that $D$ intersects $S_2$ transversely. Since $S_2$ and $S_2''$ are disjoint, we have $\partial D \cap S_2 = \emptyset$. Hence, the intersection $D \cap S_2$ consists of a collection of simple loops, each of which is homotopically trivial on $D$, and therefore trivial in $\mathbb{S}^3 \setminus K$. Since $S_2$ is incompressible, these loops are also homotopically trivial on $S_2$. By an isotopy of $D$, we may eliminate all such intersection loops and hence assume that $D$ is disjoint from $S_2$.

    Moreover, we may further adjust the disk $D$ so that we can arrange
    \begin{align}
        (\widetilde{S_2'''})_0 \subset \bigcup_{n=h_1}^{h_2-1}\widetilde{E}_{n}(S_1) \notag
    \end{align}
    if $k < h_2$ (the case $k = h_2$ can be treated analogously, so in what follows we always assume $k < h_2$). Consequently, we can always ensure that
    \begin{align}
        d_*(S_1,S_2''') \leq d_* - 1. \notag
    \end{align}
    Indeed, if $(\widetilde{S_2'''})_0$ were to leave these regions (without loss of generality, suppose that $(\widetilde{S_2'''})_0$ intersects $(\widetilde{S_1})_{h_2}$), then, since $(\widetilde{S_2''})_0$ is disjoint from $(\widetilde{S_1})_{h_2}$, the intersection must occur in the interior of $\widetilde{D}$. As in the previous argument, this would imply that $(\widetilde{S_1})_{h_2}$ is compressible, which is a contradiction.
    
    Repeating the above procedure of cutting along a compressing loop and capping off with appropriate disks until the surface becomes incompressible yields a Seifert surface, denoted by $S_2'$, which satisfies $d_*(S_2,S_2') = 1$ and $d_*(S_1,S_2') \leq d_* - 1$.

    \item For the proof in the case of $IS_\ell(K)$, one final step remains: using the condition that $S_1$ has minimal genus, we show that the genus of $S_2'$ is at most $\ell$. In fact, it suffices to prove that the genus of $S_2''$ is at most $\ell$.
    
    Suppose, for contradiction, that this is not the case. Then, in the case $k < h_2$ (the case $k = h_2$ is analogous), the Euler characteristic of $(\widetilde{S_2})_0 \cap \widetilde{E}_{h_2}(S_1)$ would be strictly larger than that of $(\widetilde{S_1})_{h_2} \cap \widetilde{E}_{-1}(S_2)$. Consider the surface
    \begin{align}
        \widetilde{S_1'} := \left( (\widetilde{S_2})_0 \cap \widetilde{E}_{h_2}(S_1) \right) \cup \left( (\widetilde{S_1})_{h_2} \setminus \widetilde{E}_{-1}(S_2) \right). \notag
    \end{align}
    It follows that $p(\widetilde{S_1'})$ is a Seifert surface whose genus is strictly smaller than that of $S_1$, contradicting the assumption that $S_1$ has minimal genus.
    
    \item By the induction hypothesis, we have
    \begin{align}
        d_* - 1 \geq d_*(S_1, S_2') \geq d(S_1, S_2'). \notag
    \end{align}
    Adding $1$ to both sides yields
    \begin{align}
        d_* \geq d(S_1, S_2') + 1 = d(S_1, S_2') + d(S_2, S_2') \geq d. \notag
    \end{align}

    \end{itemize}

    Combining the above arguments, we have shown that $d_* \leq d$ and $d_* \geq d$. Therefore, the lemma follows.
    
\end{proof}

\begin{proposition}
    Let $K$ be a knot of genus $g$. Then for any $\ell \geq g$, the complex $IS_{\ell}(K)$ is connected.
\end{proposition}

\begin{proof}
    Take any two vertices $[S_1],[S_2] \in IS_{\ell}(K)$. Let $[S_3] \in IS_{g}(K) \subset IS_{\ell}(K)$ be an arbitrary minimal-genus Seifert surface. By the second conclusion of \cref{kakimizu1}, there exists a path in $IS_{\ell}(K)$ connecting $[S_1]$ and $[S_3]$. Similarly, there exists a path in $IS_{\ell}(K)$ connecting $[S_2]$ and $[S_3]$. Therefore, any two vertices $[S_1]$ and $[S_2]$ lie in the same connected component of $IS_{\ell}(K)$. Hence, $IS_{\ell}(K)$ is (path-)connected.

    This result is included in \cref{main thm2: Hyperbolic Knot Linear Bound}.
\end{proof}

\begin{remark}
    \label{kakimizu2}
    By \cref{kakimizu1}, we immediately obtain the following: for any two vertices $[S_1]$ and $[S_2]$ in $IS(K)$ of minimal genus $g=g(K)$, we have $d_g([S_1],[S_2]) = d_\infty([S_1],[S_2]) = d_*([S_1],[S_2])$. In fact, this result was already proved by Kakimizu in \cite[Proposition 3.1]{MR1177053}.
\end{remark}


\section{Annulus Surgery}

To prepare for the proof of the main theorem \cref{thm:euler-inequality} in the next section, we introduce the concept of annulus surgery.

\begin{definition}[$I$-shaped and $U$-shaped Annuli]
    \label{defi: I and U}

Let $K$ be a knot in $\mathbb{S}^3$, and let $S_1$ and $S_2$ be two distinct Seifert surfaces for $K$ such that $S_1 \cap S_2 \neq \emptyset$ is a disjoint union of simple loops. For positive integers $i,j$, suppose that $(\widetilde{S_2})_{j}^{i}$ contains an annular
connected component, which we denote by $\widetilde{A}$. In what follows, we only consider such annuli $\widetilde{A}$ whose two boundary components lie entirely in $\widetilde{C}_{i,j} \cup \widetilde{C}_{i+1,j}$, and do not lie in $\partial \widetilde{E}$.

\begin{itemize}
    \item If the two boundary components of $\widetilde{A}$ lie in \emph{distinct} sets $\widetilde{C}_{i,j}$ and $\widetilde{C}_{i+1,j}$, we call it an \emph{$I$-shaped annulus}.
    \item Otherwise, if both boundary components lie in the \emph{same} set, we call it a \emph{$U$-shaped annulus} (see \cref{fig: I-shaped and U-shaped}).
\end{itemize}
    Similarly, we can define $I$-shaped and $U$-shaped annuli in $(\widetilde{S_1})_{i}^{j}$.
\end{definition}

\begin{figure}[H]
  \centering
  \includegraphics[width=0.9\textwidth]{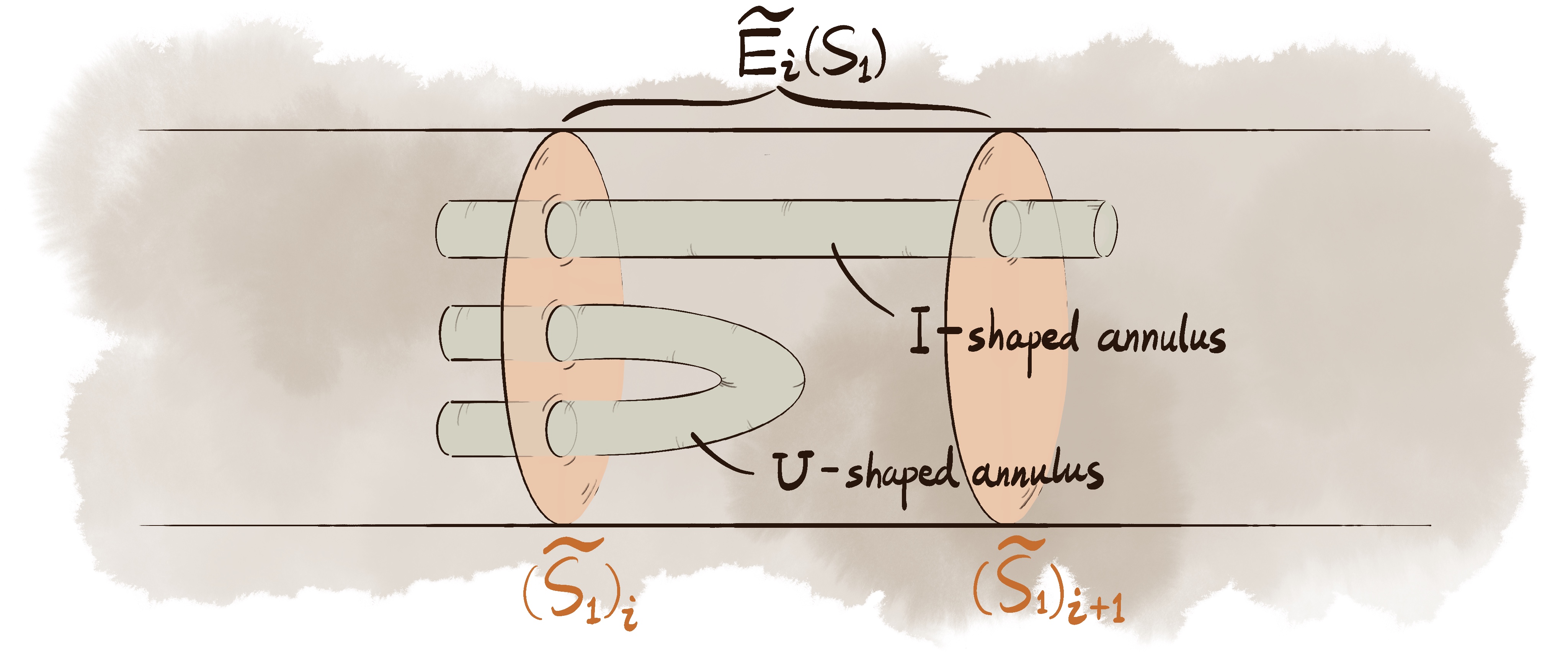}
  \caption{$I$-shaped and $U$-shaped annuli.}
  \label{fig: I-shaped and U-shaped}
  \end{figure}

The proof strategy of \cref{thm:euler-inequality} fundamentally relies on the annulus surgery technique introduced later.

\begin{definition}[Annulus Surgery]
\label{annulussurgery}
Let $A \subset S_2$ be an annulus such that its lift $\widetilde{A}$ is a $U$-shaped annulus. 
Consider a thickening $A \times I$ of $A$ such that $(\partial A) \times I \subset S_1$. 
We perform the following two steps:
\begin{itemize}
    \item Modify $S_1$ to obtain $S_1'$ by
    \begin{align}
        S_1' := \left( S_1 \setminus \left( (\partial A) \times I \right) \right) \cup \left( A \times \{0,1\} \right). \notag
    \end{align}
    \item We take the connected component of $S_1'$ containing the boundary $K$ and denote it by $s(S_1)$.
\end{itemize}
We say that $s(S_1)$ is obtained from $S_1$ by an \emph{annulus surgery} along $A$ on $S_2$.
\end{definition}

\begin{figure}[H]
\centering
\includegraphics[width=0.9\textwidth]{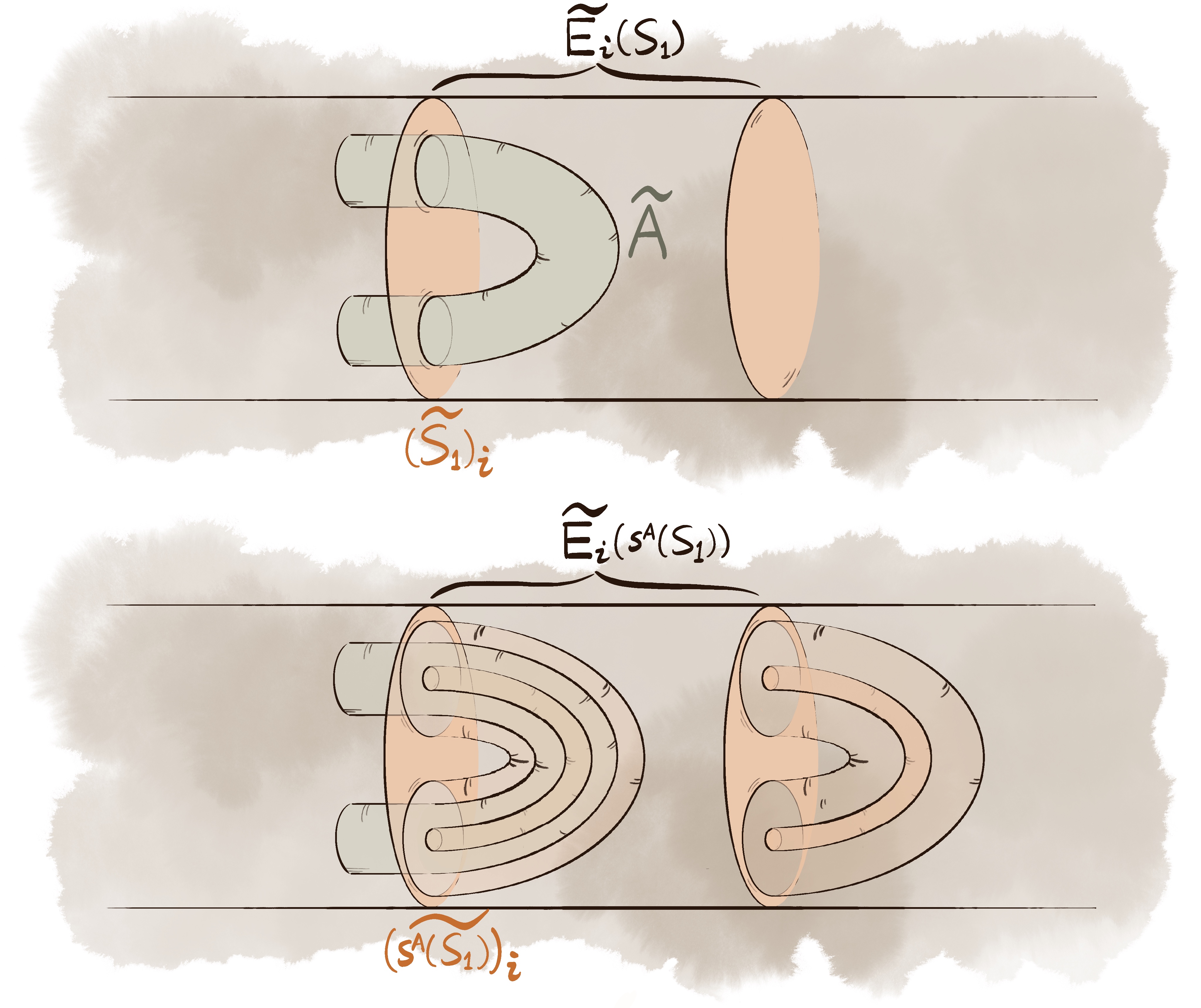}
\caption{Annulus surgery.}
\label{fig: Annulus surgery}
\end{figure}

\begin{remark}
    For an intuitive understanding, we may describe the annulus surgery in the infinite cyclic cover (see \cref{fig: Annulus surgery}).

    This figure also explains why annulus surgery can only be performed on $U$-shaped annuli, but not on $I$-shaped annuli. Since $\widetilde{A}$ lies entirely on one side of $\widetilde{S_1}$, the resulting surface after the surgery is orientable. If one were to perform the surgery on an $I$-shaped annulus instead, the resulting surface would be non-orientable.
\end{remark}

\begin{remark}
    The definition of annulus surgery is not restricted to being performed along a $U$-shaped annulus $A \subset S_2$, but can also be extended to a $U$-shaped annulus $A \subset S_1$. For convenience of notation, we view annulus surgery as a transformation from a pair of surfaces to another pair of surfaces:
    \begin{align}
        s(S_1,S_2) = (s^{A}(S_1), s^{A}(S_2)). \notag
    \end{align}
    Here, the superscript $A$ indicates that the surgery is performed along the annulus $A$, which may lie in either $S_1$ or $S_2$. If $A \subset S_2$, we define $s^{A}(S_2) := S_2$, whereas if $A \subset S_1$, we define $s^{A}(S_1) := S_1$. When no confusion arises, we will sometimes omit the superscript.

    Moreover, we denote
    \begin{align}
        \widetilde{(s^{A}(S_1))}_i^j := \widetilde{E}_j(s^{A}(S_2)) \cap (\widetilde{s^{A}(S_1)})_i
        \quad \text{and} \quad
        \widetilde{(s^{A}(S_2))}_j^i := \widetilde{E}_i(s^{A}(S_1)) \cap (\widetilde{s^{A}(S_2)})_j .\notag
    \end{align}
\end{remark}

\begin{proposition} \label{pro:minimal-genus}
Let $A$ be an annulus on $S_2$ such that its lift $\widetilde{A}$ is a $U$-shaped annulus. Then,
\begin{align}
        g(s(S_1)) \leq g(S_1).\notag
    \end{align}
In particular, if $S_1$ has minimal genus, then $g(s(S_1)) = g(S_1)$; that is, $s(S_1)$ also has minimal genus.
\end{proposition}

\begin{proof}
Observe that $S'_1$ is obtained from $S_1$ by cutting along two disjoint closed loops (creating four boundary components) and then pairwise regluing these boundary components. This process preserves the Euler characteristic:
\begin{equation}
    \chi(S_1) = \chi(S'_1). \notag
\end{equation}
Moreover, since $\widetilde{A}$ lies entirely on one side of $\widetilde{S_1}$, the resulting surface $S'_1$ after the surgery is orientable.
\begin{itemize}
    \item Connected case: if $S'_1$ is connected, then $s(S_1) = S'_1$. Since $\chi(S_1) = \chi(S'_1)$ and both surfaces have only one boundary component, namely $K$, it follows that $s(S_1)$ and $S_1$ have the same genus.

    \item Disconnected case: if $S'_1$ is disconnected, then it consists of two components: the surface $s(S_1)$ bounded by $K$, and a closed surface $S^{''}_1 := S'_1 \setminus s(S_1)$. The closed component $S^{''}_1$ cannot be a sphere, since it contains at least one handle in its topology. Consequently, $g(S^{''}_1) \geq 1$. Therefore, by the Euler characteristic relation,
    \begin{align}
        \chi(S_1) = \chi(S'_1) = \chi(s(S_1)) + \chi(S^{''}_1) \leq \chi(s(S_1)), \notag
    \end{align}
    which implies that $g(S_1) \geq g(s(S_1))$. In particular, if $S_1$ has minimal genus, then the genus of $s(S_1)$ cannot be smaller than that of $S_1$, and hence $g(S_1) = g(s(S_1))$. That is, both surfaces have minimal genus.
\end{itemize}
\end{proof}

\begin{figure}[H]
\centering
\includegraphics[width=0.82\textwidth]{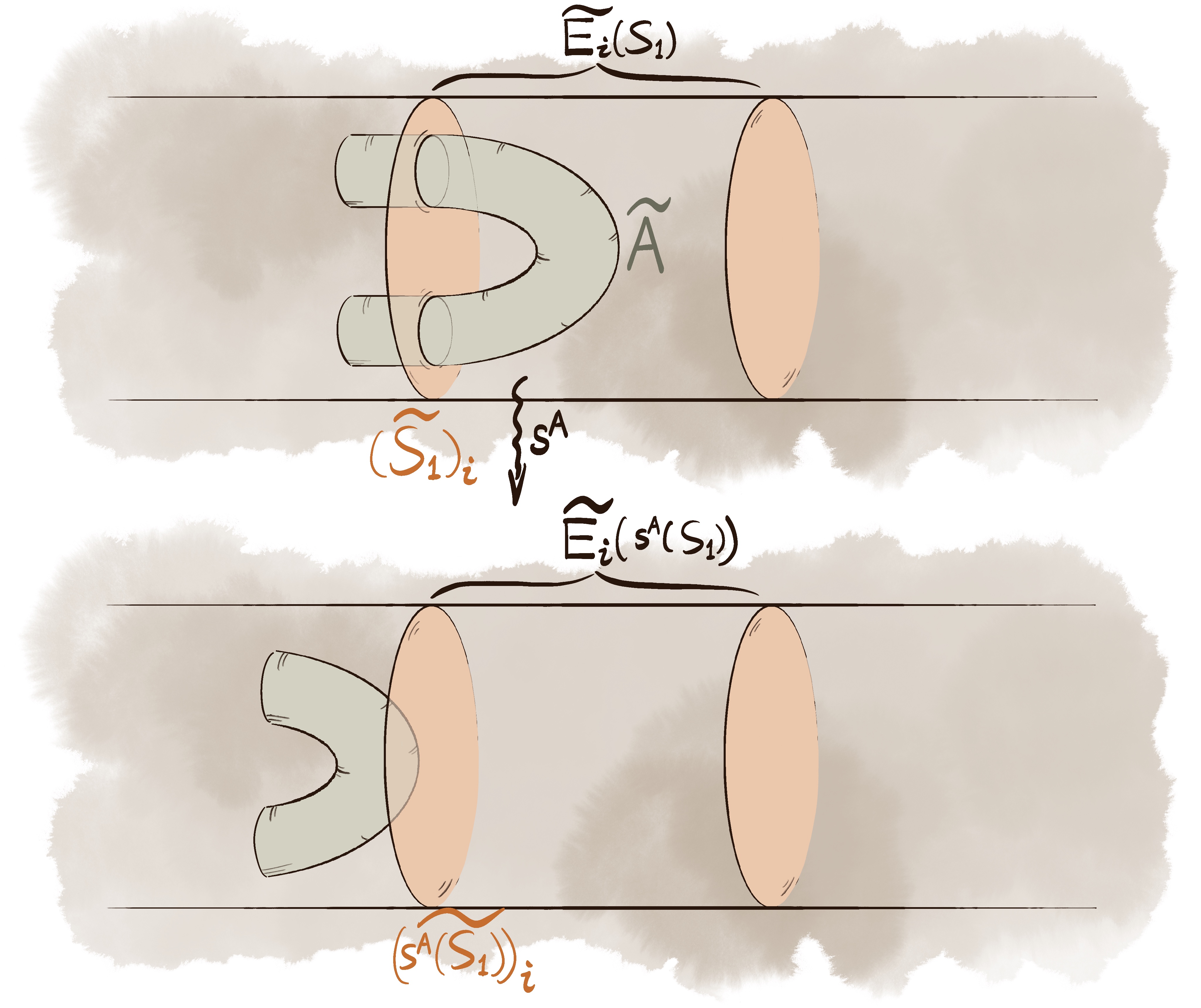}
\caption{The standard point of view of $S_1$ and $\mathcal{S}_1$.}
\label{fig: view of S1}
\end{figure}

\begin{figure}[H]
\centering
\includegraphics[width=0.82\textwidth]{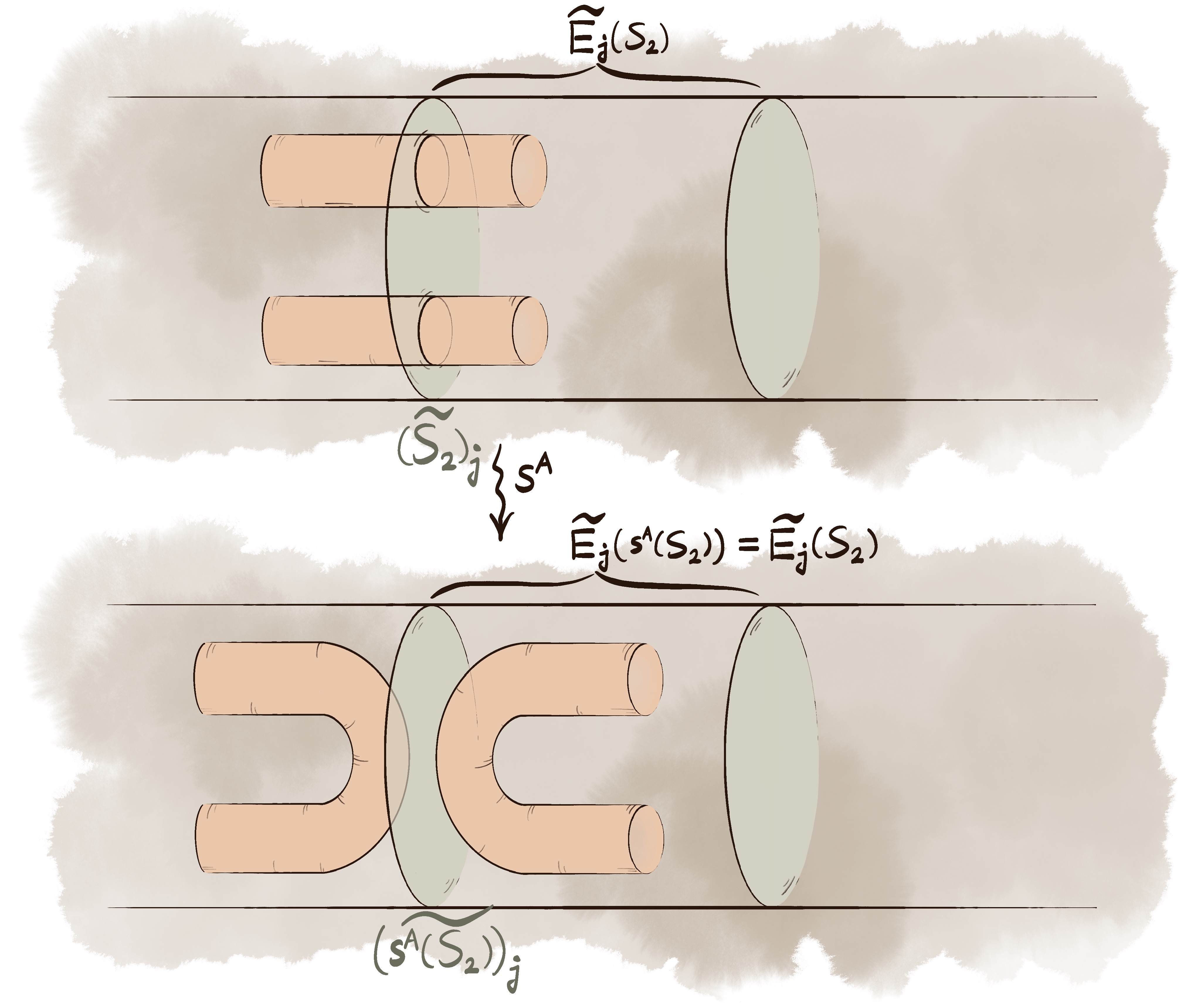}
\caption{The standard point of view of $S_2$.}
\label{fig: view of S2}
\end{figure}

By performing annulus surgery along $A \subset S_2$, we transform the pair $(S_1, S_2)$ into $s(S_1,S_2)$. If we view $(\widetilde{S_2})_j$ with respect to $\{(\widetilde{S_1})_i\}_{i\in\mathbb{Z}}$ before the surgery, and view $(\widetilde{s(S_2)})_j = (\widetilde{S_2})_j$ with respect to $\{(\widetilde{s(S_1)})_i\}_{i\in\mathbb{Z}}$ after the surgery, then the effect of the surgery can be described as follows: the annulus $\widetilde{A}$ is pushed out of the fundamental region (see \cref{fig: view of S1}).

On the other hand, if we view $(\widetilde{S_1})_i$ with respect to $\{(\widetilde{S_2})_j\}_{j\in\mathbb{Z}}$ before the surgery, and view $(\widetilde{s(S_1)})_i$ with respect to $\{(\widetilde{s(S_2)})_j\}_{j\in\mathbb{Z}} = \{(\widetilde{S_2})_j\}_{j\in\mathbb{Z}}$ after the surgery, then the surgery may be regarded as cutting the surface $(\widetilde{S_1})_i$ along two loops and then regluing them in such a way that the resulting surface locally lies in different fundamental regions (if the surgery produces disconnected components, we further discard the closed component) (see \cref{fig: view of S2}).

Based on the above observations, in particular, we obtain the following property.

\begin{proposition}
\label{prop: chi does not change by surgery}
Let $(s(S_1), s(S_2))$ be the pair of Seifert surfaces obtained from $(S_1, S_2)$ by annulus surgery along $A \subset S_2$. Recall that $\widetilde{(s(S_1))}_i^j := \widetilde{E}_j(s(S_2)) \cap (\widetilde{s(S_1)})_i$ and $\widetilde{(s(S_2))}_j^i := \widetilde{E}_i(s(S_1)) \cap (\widetilde{s(S_2)})_j$. Then for any $i,j \in \mathbb{Z}$, we have
\begin{align}
\chi\bigl((\widetilde{s(S_2)})_j^i\bigr) = \chi\bigl((\widetilde{S_2})_j^i\bigr)
\end{align}
and
\begin{align}
\chi\bigl((\widetilde{s(S_1)})_i^j\bigr) \geq \chi\bigl((\widetilde{S_1})_i^j\bigr).
\end{align}
Moreover, if $S_1$ has minimal genus, then the above inequality is in fact an equality.
\end{proposition}

\begin{proof}
If we view $(\widetilde{S_2})_j$ with respect to $\{(\widetilde{S_1})_i\}_{i\in\mathbb{Z}}$ before the surgery, and view $(\widetilde{s(S_2)})_j$ with respect to $\{(\widetilde{s(S_1)})_i\}_{i\in\mathbb{Z}}$ after the surgery, then the effect of the surgery can be described as follows: the annulus $\widetilde{A} \subset (\widetilde{S_2})_j^i$ is pushed out of the fundamental region. Consequently, we have $\chi\bigl((\widetilde{s(S_2)})_j^i\bigr) = \chi\bigl((\widetilde{S_2})_j^i\bigr)$.

On the other hand, if we view $(\widetilde{S_1})_i$ with respect to $\{(\widetilde{S_2})_j\}_{j\in\mathbb{Z}}$ before the surgery, and view $(\widetilde{s(S_1)})_i$ with respect to $\{(\widetilde{s(S_2)})_j\}_{j\in\mathbb{Z}} = \{(\widetilde{S_2})_j\}_{j\in\mathbb{Z}}$ after the surgery, then the surgery may be regarded as cutting the surface $(\widetilde{S_1})_i$ along two loops and then regluing them in such a way that the resulting surface locally lies in different fundamental regions. Up to this stage, the Euler characteristic is unchanged. If this operation produces disconnected components, we further discard any closed components; we obtain $\chi\bigl((\widetilde{s(S_1)})_i^j\bigr) \geq \chi\bigl((\widetilde{S_1})_i^j\bigr).$

Now consider the case where $S_1$ has minimal genus. If any closed component appears during the annulus surgery, then it must be a torus, whose Euler characteristic is zero. Otherwise, the retained component (which is a Seifert surface) would have strictly smaller genus than $S_1$, which contradicts the assumption that $S_1$ has minimal genus. Therefore, in this case, annulus surgery does not change the Euler characteristic, and we have $\chi\bigl((\widetilde{s(S_1)})_i^j\bigr) = \chi\bigl((\widetilde{S_1})_i^j\bigr)$.

\end{proof}

Annulus surgery can be performed not only along a $U$-shaped annulus $A \subset S_2$, but also along a $U$-shaped annulus $A \subset S_1$. By symmetry, the conclusions of \cref{pro:minimal-genus} and \cref{prop: chi does not change by surgery} remain valid in this case.

\begin{remark}
    Note that, although annulus surgery guarantees that the genus does not increase and the Euler characteristic does not decrease, a Seifert surface obtained by annulus surgery is not necessarily incompressible. However, this does not affect the subsequent argument.
\end{remark}

\section{Central Zone}
\label{Diameter Kakimizu section: Central Zone}

The main result of this section is to prove the following inequality concerning Euler characteristics:

\begin{theorem}\label{thm:euler-inequality}
Let $K \subset \mathbb{S}^3$ be an atoroidal knot, and let $(S_1,S_2)$ be a favourable pair. Denote $\left\{ h \in \mathbb{Z} \;\middle|\; (\widetilde{S_2})_0^h \neq \emptyset \right\} =: \{ h_1, h_1+1, \cdots, h_2 \}$. Then there exist integers $h_3$ and $h_4$ with $h_1\leq h_3\leq h_4\leq h_2$ such that for any positive integer $h$:
\begin{itemize}
\item If $h_3\leq h\leq h_4$, then $\widehat{\chi}(h) := \chi((\widetilde{S_1})_{0}^{-h-1}) + \chi((\widetilde{S_2})_{0}^{h}) < 0$ or $\left((\widetilde{S_1})_{0}^{-h-1} \cup (\widetilde{S_2})_{0}^{h} \right) \cap \partial \widetilde{E} \neq \emptyset$.
\item Otherwise, $\chi((\widetilde{S_1})_{0}^{-h-1}) = 0$.
\end{itemize}
\end{theorem}

The reason for introducing the sum $\widehat{\chi}(h)$ is the following. By \cref{pro: shift} and \cref{partial}, we have
\begin{align}
p\bigl(\partial(\widetilde{S_1})_0^{-h-1}\bigr)
&= p\bigl(\widetilde{C}_{0,-h-1} \cup \widetilde{C}_{0,-h}\bigr) = p\bigl(\widetilde{C}_{h+1,0} \cup \widetilde{C}_{h,0}\bigr) = p\bigl(\partial(\widetilde{S_2})_0^{h}\bigr)
\end{align}
for all integers $h$. Consequently, the surface $p\bigl((\widetilde{S_1})_0^{-h-1} \cup (\widetilde{S_2})_0^{h}\bigr)$ is either a closed surface, or a surface with one or two boundary components, each of which is the knot $K$. Its Euler characteristic is precisely $\widehat{\chi}(h)$.

The first conclusion of the theorem, $\left((\widetilde{S_1})_{0}^{-h-1} \cup (\widetilde{S_2})_{0}^{h} \right) \cap \partial \widetilde{E} \neq \emptyset$, means that either the boundary of $(\widetilde{S_1})_{0}$ is contained in $(\widetilde{S_1})_{0}^{-h-1}$, or the boundary of $(\widetilde{S_2})_{0}$ is contained in $(\widetilde{S_2})_{0}^{h}$.

We call the sequence $\{h_3, \cdots, h_4\}$ the \emph{central zone} of the pair $(S_1, S_2)$.
By definition, the Euler characteristic $\chi\bigl((\widetilde{S_1})_{0}^{-h-1}\bigr)$ can be negative only for $h \in \{h_3, \cdots, h_4\}$.
Outside the central zone, we necessarily have $\chi\bigl((\widetilde{S_1})_{0}^{-h-1}\bigr)=0$. In other words, all nontrivial topological complexity of $(\widetilde{S_1})_{0}$ is concentrated within the central zone.

\begin{proof}
The proof of \cref{thm:euler-inequality} begins by assuming that there exists an integer $h$ in the range $h_1 \leq h \leq h_2$ such that $\chi((\widetilde{S_1})_{0}^{-h-1}) = \chi((\widetilde{S_2})_{0}^{h}) = 0$ and $\left((\widetilde{S_1})_{0}^{-h-1} \cup (\widetilde{S_2})_{0}^{h} \right) \cap \partial \widetilde{E} = \emptyset$. We aim to show that either for every integer $h_1' < h$, $\chi((\widetilde{S_1})_{0}^{-h_1'-1}) = 0$, or for every integer $h_2' > h$, $\chi((\widetilde{S_1})_{0}^{-h_2'-1}) = 0$.

Since $\chi((\widetilde{S_1})_{0}^{-h-1}) = \chi((\widetilde{S_2})_{0}^{h}) = 0$, both $(\widetilde{S_1})_{0}^{-h-1}$ and $(\widetilde{S_2})_{0}^{h}$ must consist of finite collections of disjoint annuli. Furthermore, the boundary of each such annulus lies entirely in $(\widetilde{S_2})_{-h-1} \cup (\widetilde{S_2})_{-h}$ (resp., $(\widetilde{S_1})_{h} \cup (\widetilde{S_1})_{h+1}$), since $\left((\widetilde{S_1})_{0}^{-h-1} \cup (\widetilde{S_2})_{0}^{h} \right) \cap \partial \widetilde{E} = \emptyset$. Therefore, $(\widetilde{S_1})_{0}^{-h-1}$ and $(\widetilde{S_2})_{0}^{h}$ consist of finite collections of $U$-shaped annuli and $I$-shaped annuli.

Suppose that either $(\widetilde{S_1})_{0}^{-h-1}$ or $(\widetilde{S_2})_{0}^{h}$ contains at least one $U$-shaped annulus. By performing annulus surgery along such a $U$-shaped annulus $A_1$, we obtain $(\widetilde{s^{A_1}(S_1)})_{0}^{-h-1}$ and $(\widetilde{s^{A_1}(S_2)})_{0}^{h}$.

This surgical procedure has two crucial effects. First, it preserves or decreases the genus of the surface, as established in \cref{pro:minimal-genus}; more precisely, when restricted to each region, it preserves or increases the Euler characteristic, as established in \cref{prop: chi does not change by surgery}. Second, it reduces the number of boundary components of $(\widetilde{S_1})_{0}^{-h-1}$ and $(\widetilde{S_2})_{0}^{h}$ by exactly two, respectively.

Since annulus surgery preserves or increases the Euler characteristic, and the Euler characteristics of $(\widetilde{s^{A_1}(S_1)})_{0}^{-h-1}$ and $(\widetilde{s^{A_1}(S_2)})_{0}^{h}$ cannot increase any further, they must still be equal to $0$. Equivalently, each of them remains a disjoint union of annuli. We now check whether $(\widetilde{s^{A_1}(S_1)})_{0}^{-h-1}$ and $(\widetilde{s^{A_1}(S_2)})_{0}^{h}$ contain any $U$-shaped annuli. If so, we perform the surgery again on the corresponding surface, we obtain $(\widetilde{s^{A_2}s^{A_1}(S_1)})_{0}^{-h-1}$ and $(\widetilde{s^{A_2}s^{A_1}(S_2)})_{0}^{h}$, and so on. Since each surgery reduces the number of boundary components of $(\widetilde{S_1})_{0}^{-h-1}$ and $(\widetilde{S_2})_{0}^{h}$, after finitely many steps, both of $(\widetilde{s^{A_m}\cdots s^{A_2}s^{A_1}(S_1)})_{0}^{-h-1}$ and $(\widetilde{s^{A_m}\cdots s^{A_2}s^{A_1}(S_2)})_{0}^{h}$ will contain only $I$-shaped annuli (or may be empty) and no $U$-shaped ones (see \cref{fig: ManyAnuulusSurgerys} for an example).

\begin{figure}[H]
\centering
\includegraphics[width=1.0\textwidth]{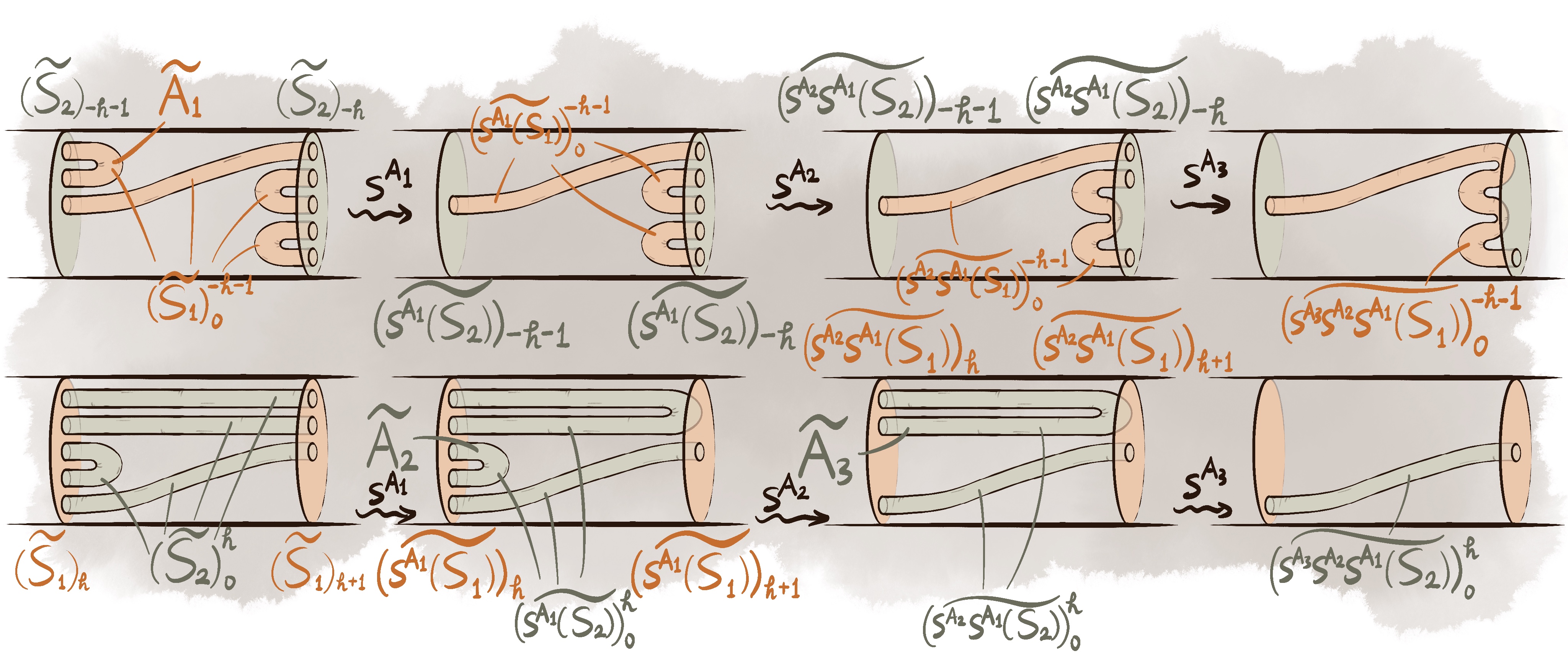}
\caption{Performing multiple annulus surgeries on $(\widetilde{S_1})_{0}^{-h-1}$ and $(\widetilde{S_2})_{0}^{h}$.}
\label{fig: ManyAnuulusSurgerys}
\end{figure}

We denote the final resulting surfaces by $(\widetilde{\mathcal{S}_1})_{0}^{-h-1}$ and $(\widetilde{\mathcal{S}_2})_{0}^{h}$. Analogous to Definition~\ref{intersection_seifert_surface}, we define the intersection curves $\widetilde{\mathcal{C}}_{i,j} := (\widetilde{\mathcal{S}_1})_i \cap (\widetilde{\mathcal{S}_2})_j$.

\begin{itemize}

    \item Case 1: both $\partial((\widetilde{\mathcal{S}_1})_{0}^{-h-1})$ and $\partial((\widetilde{\mathcal{S}_2})_{0}^{h})$ are empty; that is, both $(\widetilde{\mathcal{S}_1})_{0}^{-h-1}$ and $(\widetilde{\mathcal{S}_2})_{0}^{h}$ are empty.

    In this case, by the construction of the annulus surgeries, $(\widetilde{\mathcal{S}_1})_{0}$ is connected. Moreover, by \cref{pro:minimal-genus}, since $S_1$ has minimal genus, the surface $\mathcal{S}_1$ also has minimal genus. By \cref{prop: chi does not change by surgery} and the minimal-genus property of $S_1$, one of the following must hold: either for every integer $h_1' < h$, $\chi((\widetilde{S_1})_{0}^{-h_1'-1}) = \chi((\widetilde{\mathcal{S}_1})_{0}^{-h_1'-1}) = \chi(\emptyset) = 0$, or for every integer $h_2' > h$, $\chi((\widetilde{S_1})_{0}^{-h_2'-1}) = \chi((\widetilde{\mathcal{S}_1})_{0}^{-h_2'-1}) = \chi(\emptyset) = 0$. This completes the proof.

    Note that this step relies on the condition that $S_1$ has minimal genus. One cannot conclude that $\chi\!\bigl((\widetilde{S_2})_{0}^{h_1'}\bigr) = 0$ or $\chi\!\bigl((\widetilde{S_2})_{0}^{h_2'}\bigr) = 0$, because when $S_2$ is not of minimal genus, the closed components removed during the annulus surgery are not necessarily tori.

    \item Case 2: if both $\partial((\widetilde{\mathcal{S}_1})_{0}^{-h-1})$ and $\partial((\widetilde{\mathcal{S}_2})_{0}^{h})$ are non-empty, i.e., they consist entirely of $I$-shaped annuli. We show that this case is impossible.

Since $(\widetilde{\mathcal{S}_1})_{0}^{-h-1}$ consists of $I$-shaped annuli, and by \cref{partial}, we have $\partial(\widetilde{\mathcal{S}_1})_{0}^{-h-1} = \widetilde{\mathcal{C}}_{0,-h-1} \cup \widetilde{\mathcal{C}}_{0,-h}$. These $I$-shaped annuli naturally establish a one-to-one correspondence between the components of $\widetilde{\mathcal{C}}_{0,-h-1}$ and $\widetilde{\mathcal{C}}_{0,-h}$.

Since $\widetilde{\mathcal{C}}_{0,-h-1}$ and $\widetilde{\mathcal{C}}_{0,-h}$ each consist of $T$ disjoint copies of $\mathbb{S}^1$, we write
\begin{align}
\widetilde{\mathcal{C}}_{0,-h-1}
&=: \bigcup_{t=1}^{T} (\widetilde{\mathcal{C}}_{0,-h-1})_t,
\qquad
\widetilde{\mathcal{C}}_{0,-h}
=: \bigcup_{t=1}^{T} (\widetilde{\mathcal{C}}_{0,-h})_t. \notag
\end{align}
Here each $(\widetilde{\mathcal{C}}_{0,-h-1})_t$ and $(\widetilde{\mathcal{C}}_{0,-h})_t$ is a simple loop. The same applies to $(\widetilde{\mathcal{S}_2})_{0}^{h}$, which also consists entirely of $I$-shaped annuli. Accordingly, we write $\widetilde{\mathcal{C}}_{0,h} =: \bigcup_{t=1}^{T} (\widetilde{\mathcal{C}}_{0,h})_t$, and $\widetilde{\mathcal{C}}_{0,h+1} =: \bigcup_{t=1}^{T} (\widetilde{\mathcal{C}}_{0,h+1})_t$.

By appropriately ordering these components, we can ensure that for any $1\leq t\leq T$, we have $(\widetilde{\mathcal{C}}_{0,-h-1})_t$ and $(\widetilde{\mathcal{C}}_{0,-h})_t$ are $\mathbb{S}^1$ components connected by an annulus $\widetilde{A}_t((\widetilde{\mathcal{S}_1})_{0}^{-h-1})\subset(\widetilde{\mathcal{S}_1})_{0}^{-h-1}$ (see \cref{fig:Ishapedannuli}). Moreover, for $h'\in\{h,h+1\}$, we have $p((\widetilde{\mathcal{C}}_{0,-h'})_t)=p((\widetilde{\mathcal{C}}_{h',0})_t)$. In this setting, there exists a bijection $\sigma:\{1,\cdots,T\} \rightarrow \{1,\cdots,T\}$ such that for each $1\leq t\leq T$, we have $(\widetilde{\mathcal{C}}_{h,0})_t$ and $(\widetilde{\mathcal{C}}_{h+1,0})_{\sigma(t)}$ are connected by an annulus $\widetilde{A}_t((\widetilde{\mathcal{S}_2})_{0}^{h}) \subset(\widetilde{\mathcal{S}_2})_{0}^{h}$.

From the properties of permutations, there exists a smallest positive integer $r$ such that $\sigma^r(1)=1$. Now we construct a torus $\mathbb{T}$:
\begin{equation}\label{T}
        \mathbb{T}:=p\left(\bigcup_{t=1}^{r}\left( \widetilde{A}_{\sigma^t(1)}((\widetilde{\mathcal{S}_1})_{0}^{-h-1}) \cup \widetilde{A}_{\sigma^t(1)}((\widetilde{\mathcal{S}_2})_{0}^{h}) \right)\right). 
\end{equation}

\begin{figure}[H]
\centering
\includegraphics[width=0.9\textwidth]{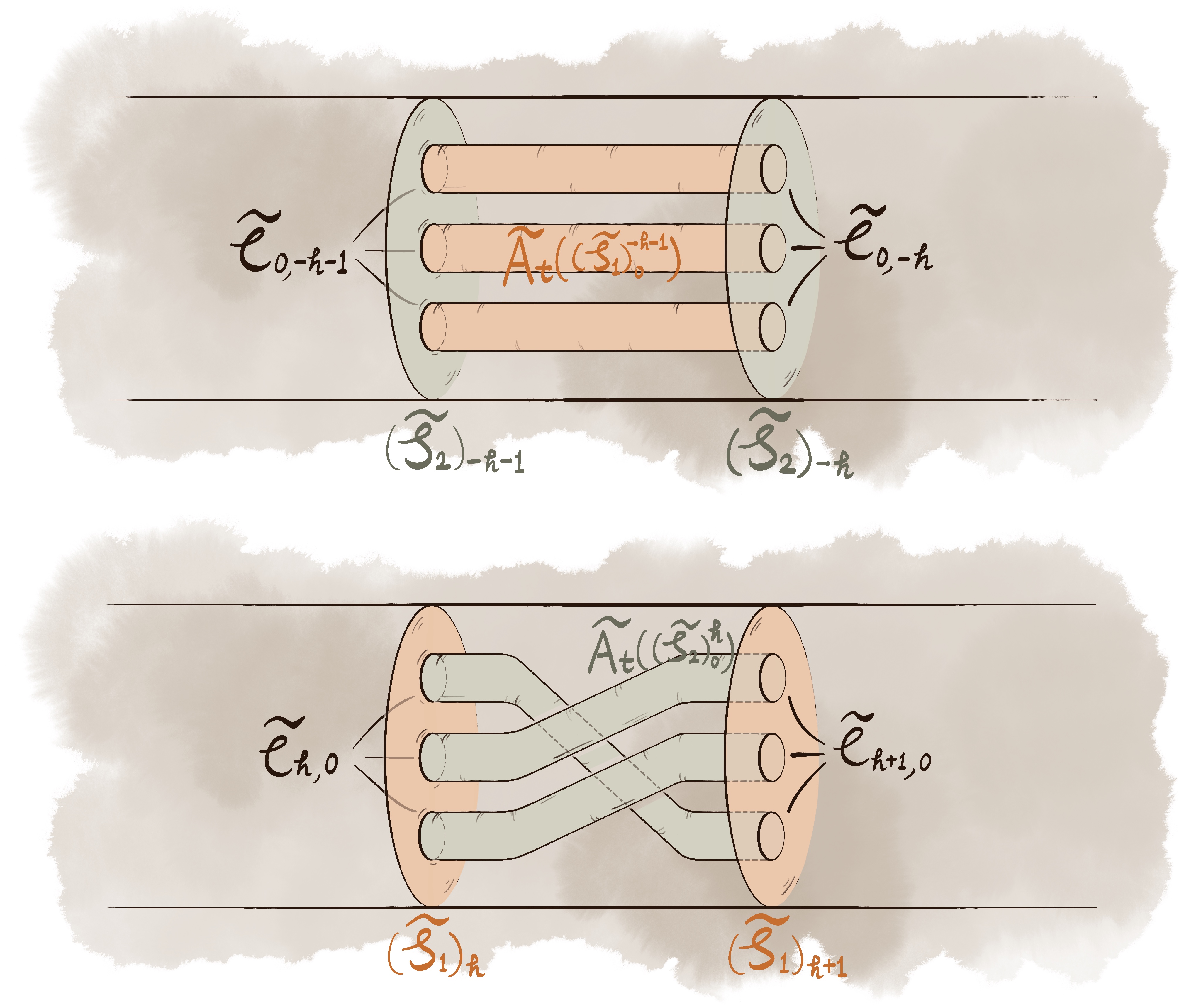}
\caption{Both $(\widetilde{\mathcal{S}_1})_{0}^{-h-1}$ and $(\widetilde{\mathcal{S}_2})_{0}^{h}$ consist entirely of $I$-shaped annuli.}
\label{fig:Ishapedannuli}
\end{figure}

We now prove the following properties of $\mathbb{T}$:
\begin{itemize}
    \item \textbf{Embeddedness}: $\mathbb{T}$ is an embedded torus.

    The collections $\left\{\widetilde{A}_{\sigma^t(1)}((\widetilde{\mathcal{S}_1})_{0}^{-h-1}) \right\}_{t=1}^{r}$ in $\widetilde{E}_{-h-1}(\mathcal{S}_2)$ and $\left\{\widetilde{A}_{\sigma^t(1)}((\widetilde{\mathcal{S}_2})_{0}^{h}) \right\}_{t=1}^{r}$ in $\widetilde{E}_{h}(\mathcal{S}_1)$ consist of pairwise disjoint annuli within their respective fundamental regions. Under the covering projection, these map to families of annuli in $\mathbb{S}^3\setminus W(K)$ that remain disjoint except for controlled intersections along $\mathcal{S}_1\cap \mathcal{S}_2$. The cyclic ordering induced by $\sigma$ ensures these projected annuli glue smoothly along the curves $p(\widetilde{\mathcal{C}}_{0,-h-1})$ and $p(\widetilde{\mathcal{C}}_{0,-h})$, forming the embedded torus $\mathbb{T}$. Crucially, the only intersections occur transversally at the surface intersections, and the disjointness in the cover prevents self-intersections, guaranteeing $\mathbb{T}$ is embedded.
    
    \item \textbf{Incompressibility}: $\mathbb{T}$ is incompressible.
    We only need to prove that the embedding map $\iota: \mathbb{T} \rightarrow \mathbb{S}^3 \setminus W(K) =: E$ induces an injective homomorphism $\iota_*: \pi_1(\mathbb{T}) \rightarrow \pi_1(E)$ on fundamental groups.

    For a point $x \in p((\widetilde{\mathcal{C}}_{0,-h-1})_1)$, the fundamental group $\pi_1(\mathbb{T}, x) \cong \mathbb{Z}^2$ has two generators $\alpha$ and $\beta$. Here, $\alpha$ represents $p((\widetilde{\mathcal{C}}_{0,-h-1})_1)$, while $\beta$ corresponds to the loop based at $x$ that traverses the following sequence of annuli and returns to $x$:
    \begin{equation}
        p\left( \widetilde{A}_{\sigma^{0}(1)}((\widetilde{\mathcal{S}_1})_{0}^{-h-1}) \right) , p\left( \widetilde{A}_{\sigma^{0}(1)}((\widetilde{\mathcal{S}_2})_{0}^{h}) \right), \cdots, p\left( \widetilde{A}_{\sigma^{r-1}(1)}((\widetilde{\mathcal{S}_1})_{0}^{-h-1}) \right) , p\left( \widetilde{A}_{\sigma^{r-1}(1)}((\widetilde{\mathcal{S}_2})_{0}^{h}) \right). \notag
    \end{equation}
    Recall that the abelianisation map is denoted by $\mathrm{ab}: \pi_1(E) \rightarrow H_1(E)$. The condition $\mathrm{ab} \circ \iota_*(\alpha)=0$ holds because $\alpha$ lifts to a closed loop in $\widetilde{E}$. In contrast, $\mathrm{ab} \circ \iota_*(\beta)\neq 0$ since $\beta$ admits a non-closed lift.
    
    Suppose that $\iota_*$ is not injective. Then there exists a nontrivial element $\gamma = a\alpha + b\beta \neq 0$ such that $\iota_*(\gamma)=0$. Applying the abelianisation map, we obtain
    \begin{align}
    \mathrm{ab} \circ \iota_*(\gamma) = a \cdot (\mathrm{ab} \circ \iota_*(\alpha)) + b \cdot (\mathrm{ab} \circ \iota_*(\beta)). \notag
    \end{align}
    Since $\mathrm{ab} \circ \iota_*(\gamma)=0$, $\mathrm{ab} \circ \iota_*(\alpha)=0$ and $\mathrm{ab} \circ \iota_*(\beta)\neq 0$, it follows that $b=0$.
    Hence $\gamma = a\alpha$, which implies that $\alpha$ is null-homotopic in $E$.
    
    This contradicts the incompressibility of the minimal-genus Seifert surface $\mathcal{S}_1$.
    Indeed, since $S_1$ has minimal genus, the surface $\mathcal{S}_1$ obtained from it also has minimal genus (see \cref{pro:minimal-genus}).
    
    \item \textbf{Non-boundary-parallelism}:

    The condition $\mathrm{ab} \circ \iota_*(\alpha) = 0$ forces $\alpha$ to be isotopic to a longitude on $\mathcal{S}_1$ or $\mathcal{S}_2$. For the case $r=1$, the torus $\mathbb{T}$ splits into two annuli $p\left(\widetilde{A}_{1}((\widetilde{\mathcal{S}_1})_{0}^{-h-1})\right) \subset \mathcal{S}_1$ and $p\left(\widetilde{A}_{1}((\widetilde{\mathcal{S}_2})_{0}^{h})\right) \subset \mathcal{S}_2$ by \cref{T}. Thus by \cref{lem: not boundary-parallel}, we conclude that $\mathbb{T}$ is not boundary-parallel.
    
    For the case $r > 1$, we observe that $\mathrm{ab} \circ \iota_*(\pi_1(\mathbb{T})) = r\mathbb{Z}$. However, if $\mathbb{T}$ were boundary-parallel, this would require $\mathrm{ab} \circ \iota_*(\pi_1(\mathbb{T})) = \mathbb{Z}$, leading to a contradiction.
\end{itemize}

The above property implies that $\mathbb{T}$ is an incompressible torus and not boundary-parallel, which contradicts the assumption in \cref{thm:euler-inequality} that $K$ is atoroidal. Therefore, such a case is in fact impossible.
\end{itemize}

\end{proof}

\section{Proof of \cref{main thm: Hyperbolic Knot Linear Bound}}

In this section, we complete the proof of \cref{main thm: Hyperbolic Knot Linear Bound}.

\begin{proposition}\label{pro: middle length}
Let $K \subset \mathbb{S}^3$ be an atoroidal knot, and let $(S_1,S_2)$ be a favourable pair of Seifert surfaces. Denote by $g = g(K)$ the genus of $S_1$, and by $\ell$ the genus of $S_2$. Let $\{h_3,\ldots,h_4\}$ be the central zone of $(S_1,S_2)$. Then
\begin{align}
    h_4 - h_3 \leq g + \ell -1.
\end{align}
\end{proposition}

\begin{proof}
By \cref{thm:euler-inequality}, for any integer $h_3 \leq h \leq h_4$, we have
\begin{align}
\widehat{\chi}(h) := \chi((\widetilde{S_1})_{0}^{-h-1}) + \chi((\widetilde{S_2})_{0}^{h}) \leq -1, \notag
\end{align}
or $\left((\widetilde{S_1})_{0}^{-h-1} \cup (\widetilde{S_2})_{0}^{h} \right) \cap \partial \widetilde{E} \neq \emptyset$.

We aim to show that for any $h_3 \leq h \leq h_4$, either $\widehat{\chi}(h) \leq -2$, or at least one of $(\widetilde{S_1})_{0}^{-h-1}$ and $(\widetilde{S_2})_{0}^{h}$ has a boundary component lying in $\partial \widetilde{E}$.

If $\chi\bigl((\widetilde{S_2})_{0}^{h}\bigr)=0$ and both $(\widetilde{S_1})_{0}^{-h-1}$ and $(\widetilde{S_2})_{0}^{h}$ have no boundary components lying in $\partial \widetilde{E}$, then $(\widetilde{S_2})_{0}^{h}$ is a union of annuli, which may include both $I$-shaped and $U$-shaped annuli. It follows that the difference between the numbers of boundary components of $(\widetilde{S_2})_{0}^{h}$ lying on $\widetilde{C}_{h,0}$ and on $\widetilde{C}_{h+1,0}$ is an even integer, since each $U$-shaped annulus contributes two boundary components on the same side. We now prove that $\chi((\widetilde{S_1})_{0}^{-h-1}) \leq -2$. Otherwise, if $\chi((\widetilde{S_1})_{0}^{-h-1}) = -1$, then $(\widetilde{S_1})_{0}^{-h-1}$ must contain a unique pair-of-pants component. Since $(\widetilde{S_1})_{0}^{-h-1}$ has no boundary components lying in $\partial \widetilde{E}$, this implies that the difference in the number of branches between $\widetilde{C}_{0,-h-1}$ and $\widetilde{C}_{0,-h}$ is odd, contradicting the fact that the difference between the number of branches in $\widetilde{C}_{h,0}$ and $\widetilde{C}_{h+1,0}$ is even.

By symmetry, if $\chi\bigl((\widetilde{S_1})_{0}^{-h-1}\bigr)=0$ and both $(\widetilde{S_1})_{0}^{-h-1}$ and $(\widetilde{S_2})_{0}^{h}$ have no boundary components lying in $\partial \widetilde{E}$, then $\chi((\widetilde{S_2})_{0}^{h}) \leq -2$. Therefore, either $\widehat{\chi}(h) \leq -2$, or at least one of $(\widetilde{S_1})_{0}^{-h-1}$ and $(\widetilde{S_2})_{0}^{h}$ has a boundary component lying in $\partial \widetilde{E}$, as claimed.

If $(\widetilde{S_2})_{0}^{h}$ has a boundary component lying in $\partial \widetilde{E}$, then, since $(S_1,S_2)$ is a favourable pair (and hence every intersection loop on $S_2$ other than $K$ is not boundary-parallel; see \cref{defi: Favourable pair}), the connected component of $(\widetilde{S_2})_{0}^{h}$ containing this boundary component cannot be an annulus. Consequently, $\chi\bigl((\widetilde{S_2})_{0}^{h}\bigr)\leq -1$, and hence $\widehat{\chi}(h)\leq -1$.

It follows that for $h \in [h_3,h_4]$, there are at most two exceptional values of $h$ for which $\widehat{\chi}(h) > -2$. More precisely, there exists one value of $h$ such that $(\widetilde{S_2})_{0}^{h}$ has a boundary component lying in $\partial \widetilde{E}$, in which case $\widehat{\chi}(h) \leq -1$, and possibly a different value of $h$ such that $(\widetilde{S_1})_{0}^{-h-1}$ has a boundary component lying in $\partial \widetilde{E}$, in which case $\widehat{\chi}(h) \leq 0$. For all remaining values of $h$, we have $\widehat{\chi}(h) \leq -2$. Therefore,
\begin{align} \label{eq1.1}
    \sum_{h=h_3}^{h_4} \widehat{\chi}(h) \leq -2 ((h_4-h_3+1)-2) -1 = -2 (h_4-h_3 - \frac{1}{2}).
\end{align}

Since $\chi((\widetilde{S_1})_{0}) = 2-2g-1=1-2g$ and $\chi((\widetilde{S_2})_{0}) = 2 - 2\ell - 1 = 1-2\ell$, it follows that
\begin{align}\label{eq1.2}
\sum_{h=h_3}^{h_4} \widehat{\chi}(h) \geq (1-2g)+(1-2\ell) = 2(1-g-\ell).
\end{align}

Combining \cref{eq1.1} and \cref{eq1.2}, we obtain
\begin{align}
h_4 - h_3 \leq \frac{1}{2} - \frac{1}{2} \sum_{h=h_3}^{h_4} \widehat{\chi}(h) 
\leq \frac{1}{2} - \frac{1}{2}(2(1-g-\ell)) = g+\ell - \frac{1}{2}. \notag
\end{align}
Since $h_3,h_4,g,\ell$ are integers, we have $h_4 - h_3 \leq  g+\ell - 1$, which completes the proof of the lemma.
\end{proof}

\begin{theorem}\label{mainthm}
Let $K \subset \mathbb{S}^3$ be an atoroidal knot, and let $(S_1,S_2)$ be a favourable pair. Denote by $g = g(K)$ the genus of $S_1$, and by $\ell$ the genus of $S_2$. Denote $\left\{ h \in \mathbb{Z} \;\middle|\; (\widetilde{S_2})_0^h \neq \emptyset \right\} =: \{ h_1, h_1+1, \cdots, h_2 \}$. Then
\begin{align}
    h_2 - h_1 \leq \max\{4\ell + g - 3, 3\ell + 3g - 5\}.
\end{align}
\end{theorem}

\begin{proof}
    By \cref{thm:euler-inequality} we have $\chi\bigl((\widetilde{S_1})_0^{-h-1}\bigr)=0$ for all $h \geq h_4+1$. If $h_4+1 < h_2$, since $(\widetilde{S_1})_0$ is connected, there exists an annulus $\widetilde{A}_1 \subset (\widetilde{S_1})_0$ whose two boundary components lie on $(\widetilde{S_2})_{-(h_4+1)}$ and $(\widetilde{S_2})_{-h_2}$, respectively (see the upper panel of \cref{fig:A1A2A3}). If $h_4 + 1 = h_2$, we define $\widetilde{A}_1$ to be an arbitrarily chosen connected component of $(\widetilde{S_1})_0 \cap (\widetilde{S_2})_{-h_2}$, viewed as a degenerate annulus consisting of a single loop. In either case, $\widetilde{A}_1$ intersects $(\widetilde{S_2})_{-h}$ in a collection of loops for each $h$ with $h_4+1 \leq h \leq h_2$. For each such $h$, we choose an arbitrary loop $\widetilde{C}_h \subset \widetilde{A}_1 \cap (\widetilde{S_2})_{-h}$, and define the collection of chosen loops
    \begin{align}
    \widetilde{\mathcal{E}}_1 := \Bigl\{ \widetilde{C}_h \ \Big|\ h_4+1 \leq h \leq h_2 \Bigr\}. \notag
    \end{align}
    Then
    \begin{align}
    \label{eq:C_1}
    \#\widetilde{\mathcal{E}}_1 = h_2 - h_4.
    \end{align}
    If $h_4 = h_2$, we set $\widetilde{A}_1 := \varnothing$ and $\widetilde{\mathcal{E}}_1 := \varnothing$.

    Similarly, if $h_1 + 1 < h_3$, there exists an annulus $\widetilde{A}_2 \subset (\widetilde{S_1})_0$ whose two boundary components lie on $(\widetilde{S_2})_{-(h_1+1)}$ and $(\widetilde{S_2})_{-h_3}$, respectively (see the upper panel of \cref{fig:A1A2A3}). If $h_1 + 1 = h_3$, we define $\widetilde{A}_2$ to be an arbitrarily chosen connected component of $(\widetilde{S_1})_0 \cap (\widetilde{S_2})_{-h_3}$. In either case, $\widetilde{A}_2$ intersects $(\widetilde{S_2})_{-h}$ for all $h$ with $h_1+1 \leq h \leq h_3$. As before, we choose an arbitrary loop $\widetilde{C}_h \subset \widetilde{A}_2 \cap (\widetilde{S_2})_{-h}$ for each $h$, and define
    \begin{align}
    \widetilde{\mathcal{E}}_2 := \Bigl\{ \widetilde{C}_h \ \Big|\ h_1+1 \leq h \leq h_3 \Bigr\}. \notag
    \end{align}
    Then
    \begin{align}
    \label{eq:C_2}
    \#\widetilde{\mathcal{E}}_2 = h_3 - h_1.
    \end{align}
    If $h_1 = h_3$, we set $\widetilde{A}_2 := \emptyset$ and $\widetilde{\mathcal{E}}_2 := \varnothing$.

    We define
    \begin{align}
    \widetilde{\mathcal{E}} := \widetilde{\mathcal{E}}_1 \cup \widetilde{\mathcal{E}}_2. \notag
    \end{align}

    \textbf{Claim~1.} There do not exist two distinct elements in $\widetilde{\mathcal{E}}_1$ whose images under the covering projection are isotopic on $S_2$. Similarly, there do not exist two distinct elements in $\widetilde{\mathcal{E}}_2$ whose images under the covering projection are isotopic on $S_2$.
    
    We argue by contradiction. For the first statement, suppose that there exist two distinct elements $\widetilde{C}_i, \widetilde{C}_j \in \widetilde{\mathcal{E}}_1$ such that their images $p(\widetilde{C}_i)$ and $p(\widetilde{C}_j)$ are isotopic on $S_2$. Then $p(\widetilde{C}_i)$ and $p(\widetilde{C}_j)$ bound annuli on both $S_1$ and $S_2$, and these two annuli together form an embedded torus. Using the same argument as in Case~2 of the proof of \cref{thm:euler-inequality},
    together with \cref{lem: not boundary-parallel}, we conclude that this torus is essential. This contradicts the assumption that $K$ is atoroidal. Hence the assumption is false, and claim~1 follows. The argument for $\widetilde{\mathcal{E}}_2$ is identical.

    Moreover, since $(S_1,S_2)$ is a favourable pair, every element of $\widetilde{\mathcal{E}}_1$ and $\widetilde{\mathcal{E}}_2$ projects to an essential loop on $S_2$. Since there are at most $3\ell-2$ pairwise non-isotopic, disjoint essential loops on $S_2$ (as follows from a pants decomposition), we obtain $\#\widetilde{\mathcal{E}}_1 \leq 3\ell-2$, and $\#\widetilde{\mathcal{E}}_2 \leq 3\ell-2$. Combining this with \cref{eq:C_1} and \cref{eq:C_2}, we have
    \begin{align}
        \label{main eq}
        h_2 - h_4 \leq 3\ell - 2, \quad h_3 - h_1 \leq 3\ell - 2.
    \end{align}

    In fact, at this stage we have already shown that $h_2 - h_1$ is bounded by a linear function of $\ell$. More precisely, combining \cref{main eq} and \cref{pro: middle length}, we obtain
    \begin{align}
        h_2 - h_1 &= (h_4 - h_3) + (h_2-h_4) + (h_3-h_1) \notag \\
        &\leq (g + \ell - 1) + (3\ell - 2) + (3\ell - 2) \notag \\
        &\leq 7\ell + g - 5. \notag
    \end{align}
    However, in order to obtain a better constant, we need to carry out a more refined analysis, as follows.

    We first consider a special case. Suppose that $\widetilde{A}_1 = \varnothing$ or $\widetilde{A}_2 = \varnothing$ (or both), so that $h_2 - h_4 = 0$ whenever $\widetilde{A}_1 = \varnothing$, and $h_3 - h_1 = 0$ whenever $\widetilde{A}_2 = \varnothing$. Then we have 
    \begin{align}
        h_2 - h_1 \leq (g+\ell-1) + (3\ell-2) + 0 \leq 4\ell + g - 3, \notag
    \end{align}
    and hence the theorem follows in this case. Therefore, in the remainder of the proof we assume that $\widetilde{A}_1 \neq \varnothing$ and $\widetilde{A}_2 \neq \varnothing$. Moreover,
    \begin{align}
        h_1 + 1 \leq h_3 \quad \text{and} \quad h_4 + 1 \leq h_2. \notag
    \end{align}

    WLOG, we may assume that $h_2 - h_1$ is sufficiently large, namely $h_2 - h_1 \geq 3g - 1$. Otherwise, we have $h_2 - h_1 \leq (3g - 1) - 1 \leq 3g + 3\ell - 5$, and hence the theorem follows immediately. Thus, we can choose integers $h_5$ and $h_6$ such that
    \begin{align}
    h_1+1 \leq h_5 \leq h_3, \quad h_4+1 \leq h_6 \leq h_2, \notag
    \end{align}
    and
    \begin{align}
        \label{main eq 3}
    h_6-h_5 = \max\{h_4+1-h_3,3g-2\}.
    \end{align}
    We define
    \begin{align}
    \widetilde{\mathcal{E}}' := \bigl\{ \widetilde{C}_h \in \widetilde{\mathcal{E}} \ \big|\ h \leq h_5 \ \text{or} \ h \geq h_6 \bigr\}. \notag
    \end{align}

    \begin{figure}[H]
    \centering
    \includegraphics[width=0.9\textwidth]{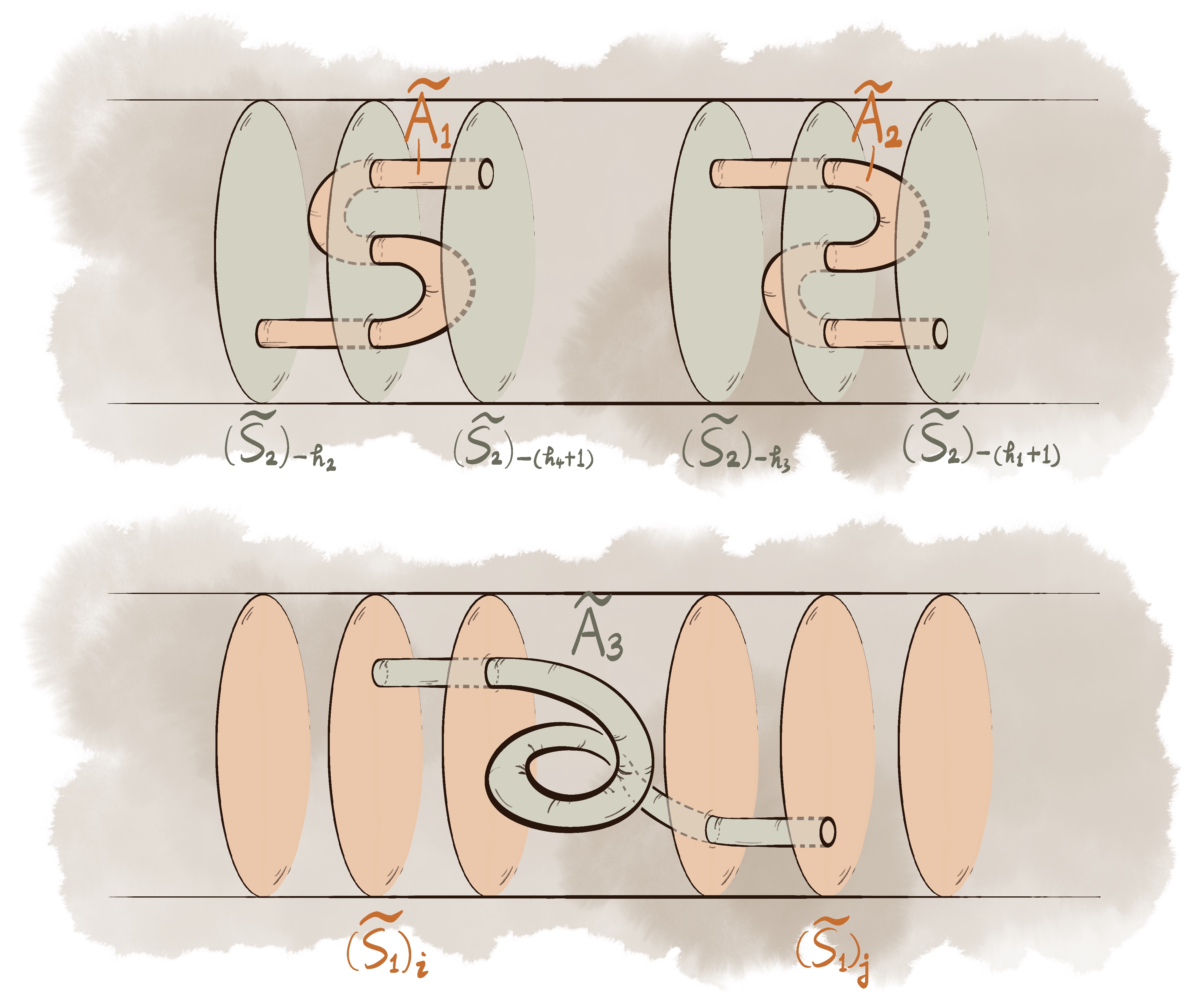}
    \caption{The annuli $\widetilde{A}_1$, $\widetilde{A}_2$, and $\widetilde{A}_3$.}
    \label{fig:A1A2A3}
    \end{figure}

    \textbf{Claim~2.} There do not exist two distinct elements in $\widetilde{\mathcal{E}}'$ whose images under the covering projection are isotopic on $S_2$.
    
    We again argue by contradiction. Suppose that there exist two distinct elements $\widetilde{C}_i, \widetilde{C}_j \in \widetilde{\mathcal{E}'}$ such that they are isotopic on $S_2$. If both $\widetilde{C}_i$ and $\widetilde{C}_j$ lie in $\widetilde{\mathcal{E}}_1$, then we obtain a contradiction by Claim~1. The same argument applies if both lie in $\widetilde{\mathcal{E}}_2$. Hence, it suffices to consider the situation where $\widetilde{C}_i$ and $\widetilde{C}_j$ lie in different annuli. Without loss of generality, assume that $\widetilde{C}_i \in \widetilde{\mathcal{E}}_1$ and $\widetilde{C}_j \in \widetilde{\mathcal{E}}_2$.

    Since
    \begin{align}
        \widetilde{C}_i = \widetilde{A}_1 \cap (\widetilde{S}_2)_{-i} \subset (\widetilde{S}_1)_0 \cap (\widetilde{S}_2)_{-i}, \quad \widetilde{C}_j = \widetilde{A}_2 \cap (\widetilde{S}_2)_{-j} \subset (\widetilde{S}_1)_0 \cap (\widetilde{S}_2)_{-j}, \notag
    \end{align}
    it follows that
    \begin{align}
        \tau^{i}(\widetilde{C}_i) \subset (\widetilde{S}_1)_{i} \cap (\widetilde{S}_2)_0, \quad \tau^{j}(\widetilde{C}_j) \subset (\widetilde{S}_1)_{j} \cap (\widetilde{S}_2)_0. \notag
    \end{align}
    Since $p(\widetilde{C}_i)$ and $p(\widetilde{C}_j)$ are isotopic on $S_2$, the loops $\tau^{i}(\widetilde{C}_i)$ and $\tau^{j}(\widetilde{C}_j)$ bound an annulus in $(\widetilde{S}_2)_0$, which we denote by $\widetilde{A}_3$ (see the bottom panel of \cref{fig:A1A2A3}). Moreover, $\widetilde{A}_3$ intersects $(\widetilde{S}_1)_h$ for all $i \leq h \leq j$, and these intersections consist of $j-i+1$ loop components.

    Since $i \leq h_5 \leq h_6 \leq j$, we have $j - i + 1 \geq h_6 - h_5 + 1 \geq 3g - 1 > 3g - 2$. That is, $j - i + 1$ exceeds the maximal possible number of pairwise non-isotopic, disjoint essential loops on $S_1$. Therefore, there must exist $i \leq t_1 \neq t_2 \leq j$ such that $p(\widetilde{A}_3 \cap (\widetilde{S}_1)_{t_1})$ and $p(\widetilde{A}_3 \cap (\widetilde{S}_1)_{t_2})$ are isotopic on $S_1$. Consequently, we obtain two annuli that together form an essential torus, contradicting the assumption that $K$ is atoroidal. This proves Claim~2.

    As a result, we obtain $\# \widetilde{\mathcal{E}'} = \bigl(h_5 - (h_1 + 1) + 1\bigr) + \bigl(h_2 - h_6 + 1\bigr) \leq 3\ell - 2$, i.e.
    \begin{align}
        \label{main eq 4}
        (h_2-h_1) - (h_6-h_5) \leq 3\ell - 3.
    \end{align}

    Combining \cref{main eq 3} and \cref{main eq 4}, we obtain
    \begin{align}
        h_2 - h_1 &\leq (h_6 - h_5) + 3\ell - 3 \notag\\
        &\leq \max\{h_4+1-h_3,3g-2\} + 3\ell - 3 \notag\\
        &\leq \max\{g+\ell,3g-2\} + 3\ell - 3 \notag\\
        &=\max\{4\ell + g - 3, 3\ell + 3g - 5\}. \notag
    \end{align}
    This completes the proof of the theorem.

\end{proof}

\begin{corollary}\label{mainresult1}
    Let $K \subset \mathbb{S}^3$ be an atoroidal knot, and let $S_1$ be a minimal-genus Seifert surface and $S_2$ be an incompressible Seifert surface. Denote by $g = g(K)$ the genus of $S_1$, and by $\ell$ the genus of $S_2$. Then
\begin{align}
    d_{\ell}([S_1],[S_2]) = d_{\infty}([S_1],[S_2]) \leq \max\{4\ell + g - 2, 3\ell + 3g - 4\}. \notag
\end{align}
\end{corollary}

\begin{proof}
    By \cref{Sakuma3}, up to isotopy we may assume that $(S_1,S_2)$ is a favourable pair.
    
    By \cref{defi: Kakimizu distance} and \cref{kakimizu1}, together with \cref{mainthm}, we obtain
    \begin{align}
        d_{\ell}([S_1],[S_2]) &= d_{\infty}([S_1],[S_2]) = d_*([S_1],[S_2]) \notag \\
        &\leq d_*(S_1,S_2)
         = \# \left\{ h \in \mathbb{Z} \;\middle|\; (\widetilde{S_2})_0^h \neq \emptyset \right\}
         = h_2 - h_1 + 1 \notag \\
         &\leq \max\{4\ell + g - 2, 3\ell + 3g - 4\}. \notag
    \end{align}
\end{proof}

Taking $\ell = g$, we obtain
\begin{align}
d_g([S_1],[S_2]) = d_\infty([S_1],[S_2]) \leq
\begin{cases}
6g - 4, & g \geq 2,\\
3, & g = 1.
\end{cases} \notag
\end{align}
On the other hand, Sakuma--Shackleton showed that when $g = 1$, the diameter of the Kakimizu complex satisfies $\operatorname{diam}(MS(K)) \leq 2$ \cite[Corollary~1.3]{MR2531146}.
Combining these results, we obtain the following corollary.

\begin{corollary}[\cref{main thm: Hyperbolic Knot Linear Bound}]\label{mainresult2}
    Let $K \subset \mathbb{S}^3$ be an atoroidal knot, and let $S_1$ and $S_2$ be two minimal-genus Seifert surfaces for $K$. Denote by $g=g(K)$ the genus of $S_1$ and $S_2$. Then
\begin{align}
    d_{g}([S_1],[S_2]) = d_{\infty}([S_1],[S_2]) \leq 6g-4. \notag
\end{align}
Hence, by the arbitrariness of $S_1$ and $S_2$, we have
\begin{align}
    \operatorname{diam} (MS(K)) \leq 6g-4.
\end{align}
\end{corollary}

\begin{corollary}[\cref{main thm2: Hyperbolic Knot Linear Bound}]\label{mainresult3}
    Let $K \subset \mathbb{S}^3$ be an atoroidal knot, and let $[S_1]$ and $[S_2]$ be two vertices in $IS_{\ell}(K)$. Let $g$ be the genus of the knot $K$. Then
\begin{align}
    d_{\ell}([S_1],[S_2]) \leq 2\max\{4\ell + g - 2, 3\ell + 3g - 4\}, \notag
\end{align}
and
\begin{align}
d_{\infty}([S_1],[S_2]) \leq 2\max\{4\ell + g - 2, 3\ell + 3g - 4\}. \notag
\end{align}
Hence,
\begin{align}
    \operatorname{diam} (IS_{\ell}(K)) \leq 2\max\{4\ell + g - 2, 3\ell + 3g - 4\}.
\end{align}
\end{corollary}

\begin{proof}
    Choose a minimal-genus Seifert surface $S_3$ for $K$. We know that $[S_3]$ is also a vertex of $IS_{\ell}(K)$. By \cref{mainresult1}, we have
    \begin{align}
        d_{\ell}([S_1],[S_2]) \leq d_{\ell}([S_1],[S_3]) + d_{\ell}([S_3],[S_2]) \leq 2\max\{4\ell + g - 2, 3\ell + 3g - 4\}. \notag
    \end{align}
    The same argument applies to $d_{\infty}$. This completes the proof.
\end{proof}

\section*{Acknowledgements}
We express deep gratitude to Professors Shing-Tung Yau and Yi Huang for their invaluable guidance and consistent assistance throughout this work, and to Makoto Sakuma for his insightful suggestions and thought-provoking questions that significantly shaped the research.

\printbibliography[title={References}] 


\end{document}